% LaTeX file
\documentclass[11pt,a4paper]{article}

% ------------------------
% Page format
% ------------------------
\setlength{\textwidth}{6.3in}
\setlength{\textheight}{8.7in}
\setlength{\topmargin}{0pt}
\setlength{\headsep}{0pt}
\setlength{\headheight}{0pt}
\setlength{\oddsidemargin}{0pt}
\setlength{\evensidemargin}{0pt}

% ------------------------
% Use 'European' formatting
% ------------------------
\frenchspacing
\setlength{\parskip}{9pt plus 3pt minus 1pt}
\setlength{\parindent}{0pt}

% ------------------------
% Packages
%\usepackage{cancel}
\RequirePackage{amsmath}
\RequirePackage{amssymb,latexsym}
\RequirePackage{amsthm}
\RequirePackage[hang,flushmargin,bottom]{footmisc}
\RequirePackage[margin=0.35in,format=hang,font=small,labelfont=bf]{caption}
\RequirePackage{needspace}
\RequirePackage{graphicx,tikz}
\RequirePackage{mpatt}
\RequirePackage{stmaryrd}
\RequirePackage[T1]{fontenc}
\RequirePackage[sc]{mathpazo}
\RequirePackage[colorlinks=true,urlcolor=blue,linkcolor=blue,citecolor=blue]{hyperref}

\hypersetup{pdfstartview={XYZ null null 1.00}}

\pdfsuppresswarningpagegroup=1

\newenvironment{bullets} {\vspace{-9pt}\begin{itemize}\itemsep0pt} {\end{itemize}\vspace{-9pt}}
\newenvironment{bulletnums} {\vspace{-9pt}\begin{enumerate}\itemsep0pt} {\end{enumerate}\vspace{-9pt}}
\newcommand{\together}[1]{\Needspace*{#1\baselineskip}}

\newcommand{\AAA}{\mathcal{A}}
\DeclareMathOperator{\asc}{\mathsf{asc}}
\DeclareMathOperator{\av}{\mathsf{Av}}
\newcommand{\BBB}{\mathcal{B}}
\newcommand{\geqs}{\geqslant}
\newcommand{\leqs}{\leqslant}
\newcommand{\liminfty}[1][n]{\lim\limits_{#1\rightarrow\infty}}
\newcommand{\mxp}{M_\prec}
\newcommand{\nfrac}[2]{{}^{#1}\!/\!{}_{#2}}
\newcommand{\PPP}{\mathcal{P}}
\newcommand{\QQQ}{\mathcal{Q}}
\newcommand{\preq}{\preccurlyeq}
\newcommand{\veps}{\varepsilon}

\newcommand{\nl}{{\small NL}}
\newcommand{\nls}{{\footnotesize NL}}
\newcommand{\thf}{{\bf3}-free}
\newcommand{\thfnl}{\thf{} \nl{}}
\newcommand{\thttf}{\texorpdfstring{$\{${\bf3}, {\bf2}+{\bf2}$\}$}{\{3, 2+2\}}-free}
\newcommand{\thttfnl}{\thttf{} \nl{}}
\newcommand{\ttf}{({\bf2}+{\bf2})-free}
\newcommand{\ttfnl}{\ttf{} \nl{}}

\newcommand{\ii}{red}
\newcommand{\oo}{blue}
\newcommand{\io}{\ii{}-\oo{}}
\newcommand{\ipat}{\textsf{[}3--12}

\newcommand{\nlmx}{\big(\begin{smallmatrix}1&0\\{\bf1}&1\end{smallmatrix}\big)}
\newcommand{\thfmx}{\big(\begin{smallmatrix}1&\\{\bf1}&1\end{smallmatrix}\big)}

\newcommand{\pamx}{\Big(\begin{smallmatrix}1&0&0\\{\bf1}&0&0\\&{\bf1}&1\end{smallmatrix}\Big)}
\newcommand{\pbmx}{\Big(\begin{smallmatrix}0&1&0\\{\bf1}&0&1\\&{\bf1}&0\end{smallmatrix}\Big)}
\newcommand{\pcmx}{\Big(\begin{smallmatrix}0&0&1\\{\bf1}&1&0\\&{\bf1}&0\end{smallmatrix}\Big)}
\newcommand{\paaamx}{\begin{smallmatrix}i&{\bf\color{gray}1}&1&0&0\\j&&{\bf1}&0&0\\k&&&{\bf1}&1\\\ell&&&&{\bf\color{gray}1}\\[2pt]&i&j&k&\ell\end{smallmatrix}}
\newcommand{\pbbbmx}{\begin{smallmatrix}i&{\bf\color{gray}1}&0&1&0\\k&&{\bf1}&0&1\\j&&&{\bf1}&0\\\ell&&&&{\bf\color{gray}1}\\[2pt]&i&k&j&\ell\end{smallmatrix}}
\newcommand{\pcccmx}{\begin{smallmatrix}i&{\bf\color{gray}1}&0&0&1\\k&&{\bf1}&1&0\\\ell&&&{\bf1}&0\\j&&&&{\bf\color{gray}1}\\[2pt]&i&k&\ell&j\end{smallmatrix}}

% ------------------------
% Permutation plots
% ------------------------
% For labels, use \node at (x,y) {text};
% ------------------------
\newcommand{\plotptradius}{0.275}
\newcommand{\setplotptradius}[1]{\renewcommand{\plotptradius}{#1}}

\newcommand{\plotpt}[3][] % [colour]{x}{y}
{ \fill[#1,radius=\plotptradius] (#2,#3) circle; }

\newcommand{\circpt}[3][] % [colour]{x}{y}
{ \draw[#1,radius=\plotptradius+0.075] (#2,#3) circle; }

\newcommand{\plotpermnobox}[3][]  % [colour]{length-UNUSED}{perm}
{
  \foreach \y [count=\x] in {#3}
  {
    \ifnum0=\y {} \else {
      \plotpt[#1]{\x}{\y}
    } \fi
  }
}

\newcommand{\plotperm}[3][]  % [colour]{length}{perm}
{
  \plotpermnobox[#1]{#2}{#3}
  \draw[thick] (.5,.5) rectangle (#2.5,#2.5);
}

% ------------------------
% Theorems, etc.
% ------------------------
\newtheorem{thm}{Theorem}
\newtheorem{prop}[thm]{Proposition}
\newtheorem{conj}[thm]{Conjecture}

\title{\textbf{On naturally labelled posets\\and permutations avoiding 12--34}}

\author{David Bevan${}^\dagger$, Gi-Sang Cheon${}^\ddag$ and Sergey Kitaev${}^\dagger$}

\date{}

\begin{document}
\maketitle

{\begin{NoHyper}
\let\thefootnote\relax\footnotetext
{${}^\dagger$Department of Mathematics and Statistics, University of Strathclyde, Glasgow, Scotland.}
\let\thefootnote\relax\footnotetext
{${}^\ddag$Department of Mathematics, Sungkyunkwan University, Suwon, South Korea.}
\end{NoHyper}}

{\begin{NoHyper}
\let\thefootnote\relax\footnotetext
{2020 Mathematics Subject Classification:
06A07, % Combinatorics of partially ordered sets
05A05, % Permutations, words, matrices
05A19, % Combinatorial identities, bijective combinatorics
05A15, % Exact enumeration problems, generating functions
05A16. % Asymptotic enumeration
}
\end{NoHyper}}

\begin{abstract}
\noindent
A partial order $\prec$ on $[n]$ is naturally labelled (\nls{}) if $x\prec y$ implies $x<y$.
We establish a bijection between \thttf{} \nls{} posets and 12--34-avoiding permutations,
determine functional equations satisfied by their generating function, and use series analysis to investigate their asymptotic growth,
presenting evidence of stretched exponential behaviour.
We also exhibit bijections between \thf{} \nls{} posets and various other objects, and determine their generating function.
The connection between our results and a hierarchy of combinatorial objects related to interval orders is described.

\bigskip
\noindent
\textbf{Keywords}: naturally labelled poset, pattern avoidance, bijective combinatorics, generating function, asymptotic series analysis, stretched exponential
\end{abstract}

% ================================================================
\section{Introduction}

A partial order $\prec$ on $[n]$ is said to be \emph{natural}~\cite{Avann1972} or \emph{naturally labelled}~\cite{CL2011,Stanley1972,StanleyEC1}
if $\prec$ is a suborder of the normal linear order on the integers.
That is, $\prec$ is naturally labelled if $x\prec y$ implies $x<y$.
For brevity, we often use \nl{} as an abbreviation for ``naturally labelled''.

The study of \nl{} posets goes back to Richard Stanley's PhD Thesis~\cite{Stanley1972}, and independently a few years later, to Kreweras~\cite{Kreweras1981}.
One nice application is Stanley's result~\cite{Stanley1973} that the evaluation of the chromatic polynomial of a graph at $-1$ gives the number of its acyclic orientations.
\nl{} posets also appear in the literature
in connection with the celebrated Neggers--Stanley conjecture on the real-rootedness of so-called $W$-polynomials~\cite{Branden2004,Neggers1978,Stembridge1997,Stembridge2007}.
For more on \nl{} posets, see Gessel's survey~\cite{Gessel2016}.
The counting sequence for \nl{} posets on $[n]$ is \href{http://oeis.org/A006455}{A006455} in the OEIS~\cite{OEIS},
and their asymptotic enumeration is given by Brightwell, Pr\"omel and Steger~\cite{BPS1996}.

Much research on posets has considered restricted classes, yielding many interesting results, perhaps most notably in~\cite{BMCDK2010} on \ttf{} posets and related structures.
Our focus in this work is on \nl{} posets that are \thf{} (having no 3-element chain), such as that on the left in Figure~\ref{fig3Free}.

In Section~\ref{sect3Free} we present bijections between \thfnl{} posets and a number of other equinumerous combinatorial objects before deducing their generating function.
Section~\ref{sect322Free} is the heart of the paper and concerns \thfnl{} posets which are also \ttf{} (having no induced subposet consisting of two disjoint 2-element chains).
Our main result is a bijection between \thttfnl{} posets and permutations avoiding the vincular pattern 12--34.
We also exhibit bijections with other equinumerous objects.
Definitions are given in the relevant sections.
The enumeration of these objects is also investigated, yielding functional equations satisfied by their generating function, a lower bound on
the exponential term in their asymptotics,
and (using series analysis) a conjecture that their number behaves asymptotically like
\[
A\cdot (\log4)^{-n} \cdot \mu^{n^{1/3}} \cdot n^\beta \cdot n! ,
\]
where estimates are given for the constants $A$, $\mu$ and $\beta$.

Finally, Section~\ref{sectDiscussion} places our new results in the context of a hierarchy of combinatorial objects related to interval orders.
This hierarchy, presented in Figures~\ref{figMainHierarchy} and~\ref{figNewHierarchy} on pages~\pageref{figMainHierarchy} and~\pageref{figNewHierarchy}, provides a frame of reference and additional motivation for this line of research.
At its heart is a collection of bijections between equinumerous combinatorial objects,
primarily classes of posets, families of matrices, pattern-avoiding permutations, and types of ascent sequence.
We extend the hierarchy by exhibiting bijections between objects of these sorts that are equinumerous to \thttfnl{} posets.
Avenues for future research are suggested.
For further details, see the discussion in Section~\ref{sectDiscussion}.

Given a partial order $\prec$ on $[n]$, its  \emph{incidence matrix} or \emph{poset matrix} is the $n\times n$ binary matrix $\mxp$ in which $\mxp(i,j)=1$ if and only if $i\preq j$.
There is a natural bijection between posets of~$[n]$ and their poset matrices.
The following proposition characterises the incidence matrices of \nl{} posets.
See~\cite{CCKMM2022} for a characterisation of which \nl{} poset matrices can be represented as binary \emph{Riordan} matrices.

\begin{prop}\label{propNLMatrices}
Naturally labelled posets on $[n]$ are in bijection with upper-triangular $n\times n$ binary matrices
with each entry on the main diagonal equal to one
that have no $\nlmx$ submatrix whose lower left entry (shown in bold) is on the main diagonal.
\end{prop}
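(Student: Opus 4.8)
The plan is to use the fact, already noted, that $\prec\mapsto\mxp$ is a bijection between relations on $[n]$ and binary $n\times n$ matrices: a relation is just a subset of $[n]\times[n]$, and $\mxp$ is its indicator matrix. It therefore suffices to identify precisely which matrices correspond to naturally labelled partial orders. Concretely, for a binary matrix $M$ I would define $i\preq j :\!\iff M(i,j)=1$ and prove that $\preq$ is an \nl{} partial order if and only if $M$ is upper-triangular with unit diagonal and avoids the forbidden submatrix. I would then match each poset axiom to one of the matrix conditions.

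First I would record the structural correspondences, which are immediate. Reflexivity of $\preq$ is exactly the requirement that every main-diagonal entry equal~$1$. Natural labelling, $i\prec j\Rightarrow i<j$, together with reflexivity says that $M(i,j)=1$ forces $i\leqs j$, which is precisely upper-triangularity. Antisymmetry then comes for free: if $i\ne j$ with both $M(i,j)=1$ and $M(j,i)=1$, upper-triangularity would force $i<j$ and $j<i$ simultaneously, an impossibility. Thus the first two matrix conditions jointly encode reflexivity, natural labelling, and antisymmetry.

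The substance of the proposition is that the forbidden-submatrix condition encodes transitivity, and the key is to decode the pattern correctly. Choosing rows $r_1<r_2$ and columns $c_1<c_2$, the entries of $\nlmx$ read $M(r_1,c_1)=1$, $M(r_1,c_2)=0$, $M(r_2,c_1)=1$ (the bold entry), and $M(r_2,c_2)=1$; requiring the bold entry to lie on the diagonal forces $r_2=c_1$. Writing $i=r_1$, $j=r_2=c_1$, $k=c_2$ (so $i<j<k$), the pattern becomes exactly $M(i,j)=1$, $M(j,k)=1$, $M(i,k)=0$, i.e.\ $i\preq j$ and $j\preq k$ but $i\not\preq k$, a failure of transitivity. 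Conversely, any transitivity failure $i\preq j\preq k$ with $i\not\preq k$ reduces to this shape: after using upper-triangularity to dispose of the degenerate cases in which two of the indices coincide (each of which trivialises), we have $i<j<k$, and the submatrix on rows $\{i,j\}$ and columns $\{j,k\}$ — whose bold lower-left entry $M(j,j)=1$ sits on the diagonal by construction — is precisely the forbidden configuration.

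The point I expect to require the most care is justifying why it suffices to forbid the pattern \emph{only} when its lower-left entry lies on the diagonal, rather than in every position. The resolution is that the reflexive pivot $M(j,j)=1$ is exactly what lets one route a transitivity check through the middle element~$j$: every instance of $i\preq j\preq k$ automatically supplies a diagonal~$1$ at $(j,j)$, so the diagonal-anchored pattern detects all and only the transitivity violations. With this equivalence established, the two directions combine to show that $M$ meets all three conditions if and only if $\preq$ is a naturally labelled partial order, which completes the characterisation.
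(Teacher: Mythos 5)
Your proposal is correct and takes essentially the same approach as the paper's own proof: diagonal ones encode reflexivity, upper-triangularity encodes natural labelling (with antisymmetry coming for free), and the diagonal-anchored forbidden submatrix corresponds exactly to the transitivity failures $i\prec j\prec k$ with $i\nprec k$. Your write-up is merely more explicit than the paper's in decoding the submatrix pattern and in disposing of the degenerate cases where indices coincide.
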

\begin{proof}
  Reflexivity implies that
  each entry on the main diagonal of a poset matrix is equal to one.
  A partial order $\prec$ is \nl{} if and only if its poset matrix is upper-triangular since $i>j$ implies $\mxp(i,j)=0$.
  Being upper-triangular automatically entails antisymmetry: if $i\neq j$ then $\mxp(i,j)=1$ implies $\mxp(j,i)=0$.
  Finally, $\mxp$ contains $\nlmx$ as a submatrix with its lower left entry on the main diagonal if and only if there exist $i<j<k$ such that $\mxp(i,j)=1$ and $\mxp(j,k)=1$ but $\mxp(i,k)=0$, or equivalently, if $i\prec j$ and $j\prec k$, but $i\nprec k$, contradicting transitivity.
\end{proof}

\begin{figure}[t]
  \centering
  \begin{tikzpicture}[scale=0.25]
  \setplotptradius{.35}
  \draw[thick] (1,1)--(5,5)--(7,1);
  \draw[thick] (5,1)--(3,5)--(1,1)--(1,5)--(3,1)--(5,5)--(5,1)--(7,5);
  \plotpermnobox[red]{}{5,0,5,0,5,0,5}
  \plotpermnobox[blue]  {}{1,0,1,0,1,0,1}
  \plotpermnobox        {}{0,0,0,0,0,0,0,0,1}
  \node at (1,-.3) {\small3};
  \node at (3,-.3) {\small6};
  \node at (5,-.3) {\small1};
  \node at (7,-.3) {\small5};
  \node at (9,-.3) {\small8};
  \node at (1,6.3) {\small9};
  \node at (3,6.3) {\small4};
  \node at (5,6.3) {\small7};
  \node at (7,6.3) {\small2};
  \end{tikzpicture}
  \hspace{.75in}
  \begin{tikzpicture}[scale=0.25]
  \setplotptradius{.35}
  \newcommand{\edge}[2]{\draw [thick] (#1,1) arc [radius=#2,start angle=0,end angle=180];}
  \edge{3}{1}
  \edge{7}{1}
  \edge{7}{3}
  \edge{13}{1}
  \edge{13}{2}
  \edge{13}{4}
  \edge{13}{6}
  \edge{17}{3}
  \edge{17}{6}
  \plotpermnobox[blue]  {}{1,0,0,0,1,0,0,0,1,0,1}
  \plotpermnobox[red]{}{0,0,1,0,0,0,1,0,0,0,0,0,1,0,0,0,1}
  \plotpermnobox        {}{0,0,0,0,0,0,0,0,0,0,0,0,0,0,1}
  \node at ( 1,-.3) {\small1};
  \node at ( 3,-.3) {\small2};
  \node at ( 5,-.3) {\small3};
  \node at ( 7,-.3) {\small4};
  \node at ( 9,-.3) {\small5};
  \node at (11,-.3) {\small6};
  \node at (13,-.3) {\small7};
  \node at (15,-.3) {\small8};
  \node at (17,-.3) {\small9};
  \end{tikzpicture}
  \caption{The Hasse diagram of a \thfnl{} poset, and the corresponding Stanley graph}\label{fig3Free}
\end{figure}

% ================================================================
\section{\thf{} naturally labelled posets}\label{sect3Free}

A poset is \emph{\thf} if it has no 3-element chains.
That is, $\prec$ is \thf{} if there are no three elements $x\prec y\prec z$.
Every element of a \thf{} poset is thus either minimal or maximal (with isolated elements being both minimal and maximal).
See Figure~\ref{fig3Free} for an example.

In this section, we consider \thfnl{} posets. We exhibit bijections between these posets and certain matrices and certain graphs, before determining the generating function for these objects.
We conclude by establishing the generating function for those \thfnl{} posets without any isolated elements.

The following proposition characterises the incidence matrices of \thfnl{} posets.

\begin{prop}\label{prop3FreeMatrices}
The \thf{} naturally labelled posets on $[n]$ are in bijection with upper-triangular \mbox{$n\times n$} binary matrices
with each entry on the main diagonal equal to one
that contain no $\thfmx$ partial submatrix whose lower left (bold) entry is on the main diagonal.
\end{prop}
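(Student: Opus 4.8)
The plan is to build on Proposition~\ref{propNLMatrices}, which already identifies the \nl{} poset matrices as exactly the upper-triangular binary matrices with $1$'s on the main diagonal that avoid the $\nlmx$ partial submatrix with its bold entry on the diagonal. The class appearing in the present statement is obtained by replacing that pattern with $\thfmx$, that is, by erasing the constraint on the top-right entry. My first step is therefore to record the pattern containment: every occurrence of $\nlmx$ is in particular an occurrence of $\thfmx$, so avoiding $\thfmx$ is strictly stronger than avoiding $\nlmx$. Consequently every matrix meeting the hypotheses of this proposition already satisfies the hypotheses of Proposition~\ref{propNLMatrices} and hence is a genuine \nl{} poset matrix. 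This reduces the problem to singling out, among \nl{} poset matrices, precisely the \thf{} ones.

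The second step is to read the $\thfmx$-avoidance condition in poset terms, exactly as in the proof of Proposition~\ref{propNLMatrices}. A $\thfmx$ partial submatrix with its bold lower-left entry on the diagonal is a choice of rows $\{i,j\}$ and columns $\{j,k\}$ with $i<j<k$ for which $\mxp(i,j)=1$ and $\mxp(j,k)=1$, the top-right entry $\mxp(i,k)$ being left unconstrained. By the definition of $\mxp$ this says $i\prec j$ and $j\prec k$. Since the poset is \nl{}, any three-element chain $x\prec y\prec z$ automatically satisfies $x<y<z$, so three-element chains are exactly the triples $i<j<k$ with $i\prec j\prec k$. Hence the matrix contains such a $\thfmx$ submatrix if and only if the poset possesses a three-element chain, i.e. fails to be \thf{}.

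Putting the two steps together gives the asserted bijection: the matrices described are \nl{} poset matrices (first step), and among these, $\thfmx$-avoidance is equivalent to being \thf{} (second step). I do not foresee a real obstacle; the one point deserving care is the interplay of the two forbidden patterns. Because avoiding $\thfmx$ is stronger than avoiding $\nlmx$, it does double duty: it enforces the transitivity that makes the matrix a \nl{} poset matrix, and then, with transitivity in hand, it upgrades every configuration $i\prec j$, $j\prec k$ (for which transitivity already forces $\mxp(i,k)=1$) into a bona fide three-element chain $i\prec j\prec k$. Spelling out this containment of patterns is what turns the otherwise one-line observation into a rigorous argument.
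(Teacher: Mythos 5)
Your proof is correct and follows essentially the same route as the paper's: invoke Proposition~\ref{propNLMatrices} and translate an occurrence of $\thfmx$ with its bold entry on the main diagonal into the existence of $i<j<k$ with $i\prec j$ and $j\prec k$, i.e.\ a 3-element chain. The only difference is that you make explicit the pattern-containment observation that avoiding $\thfmx$ subsumes avoiding $\nlmx$ (needed so that every matrix in the stated class really is the incidence matrix of a \nl{} poset), a point the paper's proof leaves implicit.
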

\begin{proof}
  By Proposition~\ref{propNLMatrices}, the poset matrix of a \nl{} partial order $\prec$ is upper-triangular with
  each entry on the main diagonal equal to one.
  The matrix $\mxp$ contains $\thfmx$ with its lower left entry on the main diagonal
  if and only if there exist $i<j<k$ such that $\mxp(i,j)=1$ and $\mxp(j,k)=1$, or equivalently, if $i\prec j\prec k$.
  Thus the poset matrix of a \nl{} partial order is $\thfmx$-free if and only if the poset is \thf.
\end{proof}

A labelled graph with vertex set $[n]$ is a \emph{Stanley graph}~\cite{Stanley1997,SL1999} if and only if no vertex $v$ has both left neighbours \mbox{($u<v$)} and right neighbours ($u>v$).
Thus every Stanley graph is bipartite: every edge connects a vertex with (only) right neighbours to a vertex with (only) left neighbours.
There is a natural bijection between \thfnl{} posets and Stanley graphs since Stanley graphs are exactly the Hasse diagrams of \thfnl{} posets.
See Figure~\ref{fig3Free} for an example.

\begin{prop}\label{prop3FreeStanley}
The \thf{} naturally labelled posets on $[n]$ are in bijection with $n$-vertex Stanley graphs.
\end{prop}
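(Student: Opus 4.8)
The plan is to make the Hasse-diagram correspondence explicit and verify it in both directions. Write $\Phi$ for the map sending a \thfnl{} poset $\prec$ to its Hasse diagram, viewed as an undirected labelled graph on $[n]$ (the strict order recovers the orientation, so no information is lost by forgetting it), and write $\Psi$ for the reverse construction that sends a Stanley graph $G$ to the relation $\prec_G$ defined by declaring $x\prec_G y$ for $x\neq y$ exactly when $\{x,y\}$ is an edge of $G$ and $x<y$, together with all reflexive pairs. The goal is to show $\Phi$ and $\Psi$ are mutually inverse.

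The first thing I would record is the key structural fact that in a \thf{} poset every relation $x\prec y$ is automatically a covering relation: were there some $z$ with $x\prec z\prec y$, we would have a 3-element chain. Consequently the edge set of $\Phi(\prec)$ is precisely $\{\,\{x,y\}:x\prec y,\ x\neq y\,\}$. Using this I would check that $\Phi(\prec)$ really is a Stanley graph. Since $\prec$ is \nl{}, a neighbour $u<v$ of a vertex $v$ corresponds to $u\prec v$ and a neighbour $u>v$ to $v\prec u$; if $v$ had both a left and a right neighbour we would obtain a chain $u\prec v\prec w$, contradicting 3-freeness. (Equivalently, every element of a \thf{} poset is minimal or maximal, so no vertex has neighbours on both sides.)

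For the reverse direction I would verify that $\prec_G=\Psi(G)$ is a genuine \thfnl{} partial order. Reflexivity holds by construction, and antisymmetry is immediate because any non-reflexive related pair is oriented by the strict order $<$. Naturalness is built in, since $x\prec_G y$ forces $x<y$. The one point that requires care, and the step I expect to be the crux, is transitivity: it holds \emph{vacuously}, and this is exactly where the Stanley hypothesis does its work. If $x\prec_G y\prec_G z$ with the three distinct, then $x<y<z$ and both $\{x,y\}$ and $\{y,z\}$ are edges of $G$, so $y$ would have a left neighbour $x$ and a right neighbour $z$, contradicting the Stanley property. Thus there are no 2-step chains at all, which simultaneously yields transitivity and shows $\prec_G$ is \thf.

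Finally I would confirm that the two maps invert each other. Starting from a \thfnl{} poset, the relations read off from its Hasse diagram are the covering relations, which by the first observation are all of the relations, so $\Psi(\Phi(\prec))=\prec$. Starting from a Stanley graph $G$, the poset $\prec_G$ is \thf, so its Hasse diagram carries an edge for each related pair, namely each edge of $G$; hence $\Phi(\Psi(G))=G$. This establishes the bijection. The only genuinely non-routine ingredient is the vacuous transitivity argument, where the absence of a vertex with neighbours on both sides is precisely the condition needed to rule out 2-chains.
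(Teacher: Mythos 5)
Your proof is correct and takes the same route as the paper, which simply asserts (without detailed proof) that Stanley graphs are exactly the Hasse diagrams of \thfnl{} posets. You have merely made explicit the verifications the paper leaves implicit --- that in a \thf{} poset all relations are covering relations, that the Stanley condition rules out a vertex lying in a $2$-step chain (giving vacuous transitivity), and that the two maps are mutually inverse.
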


The following proposition presents enumerative results concerning \thfnl{} posets, including a recurrence relation showing a relationship with a $q$-analogue of the Stirling numbers of the second kind, and both a functional equation and an explicit expression for the generating function.
The counting sequence for \thfnl{} posets on $[n]$ is \href{http://oeis.org/A135922}{A135922} in~\cite{OEIS}.

\begin{prop}\label{prop3FreeEnum}
Let $p(n,k)$ be the number of \thf{} naturally labelled posets on $[n]$ with $k$ minimal elements. Then
\begin{equation}\label{eq3FreeRecurrence}
  p(n,k) \;=\; p(n-1,k-1) \:+\: (2^k -1)p(n-1,k)
\end{equation}
where $p(0,0)=1$, $p(n,0)=0$ if $n\geqs1$, and $p(n,k)=0$ if $n<k$.
Thus, $p(n,k)=S_2[n,k]$, where the $S_q[n,k]$ are the $q$-Stirling numbers of the second kind.

Suppose
\begin{equation}\label{eq3FreeGFDef}
F(z,y)\;=\;\sum_{n\geqs k\geqs0}p(n,k)z^ny^k
\end{equation}
is the corresponding bivariate generating function. %in which the coefficient of $z^ny^k$ is the number of \thfnl{} posets on $[n]$ with $k$ minima.
Then $F$ satisfies the functional equation
\begin{equation}\label{eq3FreeFuncEq}
F(z,y) \;=\; 1 \:+\: z \big( F(z,2y) - (1-y)F(z,y) \big) ,
\end{equation}
and can be expressed explicitly by
\begin{equation}\label{eq3FreeGF}
F(z,y) \;=\; \sum_{k\geqs0}\frac{z^ky^k}{\prod_{i=1}^k \big(1-(2^i-1)z\big)} .
\end{equation}
\end{prop}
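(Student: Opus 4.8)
The plan is to establish the recurrence~\eqref{eq3FreeRecurrence} combinatorially, recognise it as the $q$-Stirling recurrence at $q=2$, and then extract both generating-function identities from it by elementary manipulation.

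First I would prove~\eqref{eq3FreeRecurrence} by analysing the role of the largest element $n$ in a \thfnl{} poset on $[n]$. Because the labelling is natural, nothing can lie above $n$, so $n$ is always maximal; thus $n$ is either isolated (hence also minimal) or strictly above some nonempty set of elements. The key structural point, using the 3-freeness characterised in Proposition~\ref{prop3FreeMatrices}, is that every element $x$ with $x\prec n$ must itself be minimal: if $x$ were above anything, then $x\prec n$ would complete a 3-chain. Consequently the elements below $n$ form an arbitrary nonempty subset of the minimal elements of the restriction to $[n-1]$, and deleting $n$ always returns a \thfnl{} poset on $[n-1]$. I would then split into two cases. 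If $n$ is isolated, the restriction has $k-1$ minimal elements (we have removed $n$ itself), contributing $p(n-1,k-1)$. Otherwise $n$ sits above a nonempty subset of the $k$ minimal elements of the restriction; those elements stay minimal and $n$ is not minimal, so the minimal count is preserved at $k$, giving $(2^k-1)p(n-1,k)$ choices. Adding the two cases yields~\eqref{eq3FreeRecurrence}, and the stated boundary values are immediate. The $q$-Stirling identity then follows at once, since $S_q[n,k]$ satisfies $S_q[n,k]=S_q[n-1,k-1]+[k]_q\,S_q[n-1,k]$ with $[k]_q=(q^k-1)/(q-1)$, which at $q=2$ gives $[k]_2=2^k-1$ together with the same initial conditions; an induction then identifies the two arrays.

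For the functional equation I would multiply~\eqref{eq3FreeRecurrence} by $z^ny^k$ and sum over $n\geqs1$ and all $k$, peeling off the $n=0$ contribution $p(0,0)=1$. The shifted sum of the $p(n-1,k-1)$ terms contributes $zy\,F(z,y)$, while the sum of the $(2^k-1)p(n-1,k)$ terms splits as $z\big(F(z,2y)-F(z,y)\big)$, the factor $2^k$ effecting the substitution $y\mapsto 2y$. Collecting terms rearranges directly into~\eqref{eq3FreeFuncEq}.

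For the explicit form, rather than solving the functional equation I would work coefficientwise in $y$. Writing $F_k(z)$ for the coefficient of $y^k$ in $F$, the recurrence gives $F_k(z)=z\,F_{k-1}(z)+(2^k-1)z\,F_k(z)$ for $k\geqs1$, hence $F_k(z)=\dfrac{z}{1-(2^k-1)z}\,F_{k-1}(z)$, with $F_0(z)=1$ since $p(n,0)=0$ for $n\geqs1$. Iterating this first-order recursion telescopes to $F_k(z)=z^k/\prod_{i=1}^k\big(1-(2^i-1)z\big)$, and summing $\sum_{k\geqs0}F_k(z)y^k$ produces~\eqref{eq3FreeGF}. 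The only genuinely delicate step is the combinatorial one in the second paragraph: verifying that everything below $n$ is forced to be minimal and that each of the two deletion operations preserves the minimal-element count. Once the recurrence is in hand, the $q$-Stirling identification and both generating-function derivations are routine bookkeeping.
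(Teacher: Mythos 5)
Your proposal is correct and follows essentially the same route as the paper: the same deletion-of-$n$ decomposition (with the key observation that everything below $n$ is minimal by 3-freeness), the same identification with Carlitz's $q$-Stirling recurrence at $q=2$, and the same coefficientwise derivation of the product formula via $F_k(z)=zF_{k-1}(z)/\big(1-(2^k-1)z\big)$. The only cosmetic difference is that you obtain the functional equation by summing the recurrence directly, whereas the paper phrases the identical computation as a linear operator modelling the insertion of a new maximal element.
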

\begin{proof}
  The unique empty poset gives $p(0,0)=1$.
  Every nonempty poset has at least one minimum, so $p(n,0)=0$ if $n\geqs1$, and no poset has more minima than elements, so $p(n,k)=0$ if $n<k$.

  The nonempty \thfnl{} posets on $[n]$ with $k$ minima are of two types: (i)~those in which $n$ is minimal, and (ii)~those in which $n$ is not minimal.
  In type~(i), $n$ is isolated.
  Thus, if we remove~$n$ we obtain a \thfnl{} poset on $[n-1]$ with $k-1$ minima.
  So there are $p(n-1,k-1)$ posets of type~(i).

  Now consider a poset of type~(ii).
  Since $n$ is not minimal, if we remove $n$ we obtain a \thfnl{} poset on $[n-1]$ with $k$ minima.
  For every such poset, a non-minimal $n$ can be added to cover any nonempty subset of the $k$ minima, of which there are $2^k-1$.
  So there are $(2^k -1)p(n-1,k)$ posets of type~(ii).
  Hence $p(n,k)$ satisfies recurrence~\eqref{eq3FreeRecurrence}.
  This is identical to the recurrence of Carlitz for $q$-Stirling numbers of the second kind with $q=2$ (see~\cite{CER2018}):
  \[
  S_q[n,k] \;=\; S_q[n-1,k-1] \:+\: [k]_q \cdot S_q[n-1,k],
  \]
  where $[k]_q = 1 + q + \ldots + q^{k-1}$. Thus $p(n,k)=S_2[n,k]$ (see~\href{https://oeis.org/A139382}{A139382} in~\cite{OEIS}).

  If we reverse our decomposition, and consider the process of adding a new maximal element to a \thfnl{} poset
  with $k$ minima, then the new element may cover any subset of the $k$ minimal elements, so there are $2^k$ ways of adding a new element. Exactly one of these (covering no minima) increases the number of minima to $k+1$; all the others leave $k$ minimal elements.

  Now, in $F(z,y)$, each monomial $z^ny^k$ represents a \thfnl{} poset on $[n]$ with $k$ minima.
  So the process of adding a new maximal element is represented by the following operation on monomials:
  \[
  z^ny^k \;\mapsto\; z^{n+1} \big( y^{k+1}+(2^k-1)y^k \big) \;=\; z^{n+1} \big( (2y)^k - (1-y)y^k \big) .
  \]
  Thus, by extending to a linear operator on the generating function and including the empty poset as the base case, we see that $F(z,y)$ satisfies the functional equation~\eqref{eq3FreeFuncEq}.
  %\[
  %F(z,y) \;=\; 1 \:+\: z \big( F(z,2y) - (1-y)F(z,y) \big) .
  %\]

  If we let $P_k(z)=\sum_{n\geqs0}p(n,k)z^n$, then, for each $k\geqs1$, the recurrence relation~\eqref{eq3FreeRecurrence} gives
  \[
  P_k(z) \;=\; z \big( P_{k-1}(z)+(2^k -1)P_k(z) \big) .
  \]
  Thus,
  \[
  P_k(z) \;=\; \frac{zP_{k-1}(z)}{1-(2^k-1)z}.
  \]
  Iterating, with $P_0(z)=1$, gives, for each $k\geqs0$,
  \[
  P_k(z) \;=\; \frac{z^k}{(1-z)(1-3z)\ldots(1-(2^k-1)z)} \;=\; \frac{z^k}{\prod_{i=1}^k \big(1-(2^i-1)z\big)}.
  \]
  The identity $F(z,y)=\sum_{k\geqs0}P_k(z)y^k$ then yields the explicit expression~\eqref{eq3FreeGF} for $F(z,y)$.
\end{proof}

The asymptotic number of \thfnl{} posets on $[n]$ is given in an unpublished comment by Vaclav Kot\v{e}\v{s}ovec as $c\cdot 2^{n^2/4}$, where $c \approx 7.37196880$ if $n$ is even, and $c \approx 7.37194949$ if $n$ is odd; see \href{http://oeis.org/A135922}{A135922} in~\cite{OEIS}.

\together7
We conclude this section by presenting a new explicit formula for the enumeration of those \thfnl{} posets which have no isolated elements (see~\href{https://oeis.org/A323842}{A323842} in~\cite{OEIS}).

\begin{prop}\label{prop3FreeNoIsolated}
Suppose $G(z,y)$ is the bivariate generating function in which the coefficient of $z^n y^k$ is the number of \thf{} naturally labelled posets on $[n]$ with no isolated elements and $k$ minima.
Then $G$ satisfies the functional equation
\begin{equation}\label{eq3FreeNoIsolatedFuncEq}
G(z,y) \;=\;
\frac{1}{1-z^2 y^2} \left(1 - z y + z \left( G\!\left(\!\frac{z}{1-z y},\,2 y\!\right)-(1-y) (1-z y) G(z,y)\right)\right)
\end{equation}
and can be expressed explicitly by
\begin{equation}\label{eq3FreeNoIsolatedGF}
G(z,y) %\;=\; \frac1{1+zy} \, \sum_{k\geqs0} \frac{\big(z/(1+zy)\big)^ky^k}{\prod_{i=1}^k \big(1-(2^i-1)z/(1+zy)\big)} .
\;=\; \frac1{1+zy} \, \sum_{k\geqs0} \frac{z^ky^k}{\prod_{i=1}^k \big(1-(2^i-1-y)z\big)} .
\end{equation}
\end{prop}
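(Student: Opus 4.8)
The plan is to reduce everything to the generating function $F(z,y)$ of Proposition~\ref{prop3FreeEnum} by means of a bijection that deletes and inserts isolated elements. Given a \thfnl{} poset $P$ on $[n]$, delete its isolated elements; since in a \thf{} poset every element is minimal or maximal, the remaining elements, carrying the induced order and relabelled order-isomorphically onto $[m]$, form a \thfnl{} poset $Q$ with no isolated elements (an element isolated in $Q$ could have $P$-neighbours only among the deleted isolated elements, which have none, so it would already be isolated in $P$). The data needed to recover $P$ from $Q$ is exactly the choice of which of the $n=m+j$ positions are occupied by the $j$ deleted isolated elements, since isolated elements are incomparable to everything and the natural labelling is then fixed by the chosen order-preserving embedding of $Q$. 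Thus deletion of isolated elements is a bijection, and because each deleted element is a minimum of $P$ while the minima of $Q$ are precisely the non-isolated minima of $P$, a poset $Q$ on $[m]$ with $k$ minima accounts for $\binom{m+j}{j}$ posets $P$ on $[m+j]$ with $k+j$ minima.

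Writing $g(m,k)$ for the coefficient of $z^m y^k$ in $G$, this bijection gives $F(z,y)=\sum_{m,k}g(m,k)\,z^m y^k\sum_{j\geqs0}\binom{m+j}{j}(zy)^j$. The identity $\sum_{j\geqs0}\binom{m+j}{j}x^j=(1-x)^{-(m+1)}$ with $x=zy$ collapses the inner sum to $(1-zy)^{-(m+1)}$, yielding the key relation
\[
F(z,y)\;=\;\frac{1}{1-zy}\,G\!\left(\frac{z}{1-zy},\,y\right).
\]
Setting $w=z/(1-zy)$, so that $z=w/(1+wy)$ and $1/(1-zy)=1+wy$, inverts this to $G(z,y)=\frac{1}{1+zy}F\!\left(z/(1+zy),\,y\right)$.

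With this relation in hand, both claims follow by substitution. For the explicit formula~\eqref{eq3FreeNoIsolatedGF}, I would insert~\eqref{eq3FreeGF} into $G(z,y)=\frac{1}{1+zy}F(z/(1+zy),y)$; the substitution $z\mapsto z/(1+zy)$ turns each denominator factor $1-(2^i-1)z$ into $\big(1-(2^i-1-y)z\big)/(1+zy)$ and the numerator $z^ky^k$ into $z^ky^k/(1+zy)^k$, so the powers of $(1+zy)$ cancel termwise and leave exactly~\eqref{eq3FreeNoIsolatedGF}. For the functional equation~\eqref{eq3FreeNoIsolatedFuncEq}, I would substitute $F(z,y)=(1-zy)^{-1}G(z/(1-zy),y)$ into~\eqref{eq3FreeFuncEq}; the crucial simplification is that under $z=w/(1+wy)$ one has $z/(1-2zy)=w/(1-wy)$, which produces the argument $G(z/(1-zy),2y)$ appearing in~\eqref{eq3FreeNoIsolatedFuncEq}, after which clearing denominators and using the identity $(1+zy)+z(1-y)=1+z$ rearranges the result into the stated form.

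The genuinely combinatorial content, and the main thing to get right, is the bijection of the first paragraph: verifying that stripping isolated elements lands in the no-isolated class, that the minima count shifts by exactly $j$, and that all $\binom{m+j}{j}$ interleavings yield valid and distinct natural labellings. Once the relation $F=(1-zy)^{-1}G(z/(1-zy),y)$ is secured, the remaining work is routine algebraic substitution, the only non-obvious simplifications being the termwise cancellation of $(1+zy)^k$ in the explicit formula and the identity $(1+zy)+z(1-y)=1+z$ in the functional equation.
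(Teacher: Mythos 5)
Your proof is correct and takes essentially the same route as the paper: both establish the key relation $F(z,y)=\frac{1}{1-zy}\,G\!\left(\frac{z}{1-zy},\,y\right)$ via a bijection that strips out the isolated elements, invert it to $G(z,y)=\frac{1}{1+zy}\,F\!\left(\frac{z}{1+zy},\,y\right)$, and then obtain \eqref{eq3FreeNoIsolatedFuncEq} and \eqref{eq3FreeNoIsolatedGF} by substitution into \eqref{eq3FreeFuncEq} and \eqref{eq3FreeGF}. The only difference is bookkeeping in the bijection: the paper records each deleted isolated element as a ``ring'' on the preceding element (so the $1/(1-zy)$ factors appear directly as sequences of rings), whereas you record the set of positions the isolated elements occupy and use $\sum_{j\geqs0}\binom{m+j}{j}(zy)^j=(1-zy)^{-(m+1)}$ --- equivalent encodings of the same idea.
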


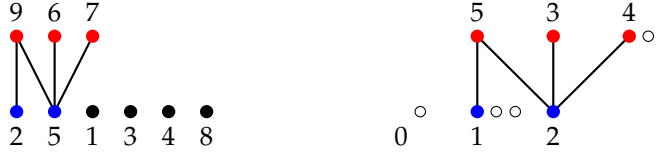
\begin{figure}[t]
  \centering
  \begin{tikzpicture}[scale=0.25]
  \setplotptradius{.35}
  \draw[thick] (1,1)--(1,5)--(3,1)--(3,5);
  \draw[thick] (3,1)--(5,5);
  \plotpermnobox[red]{}{5,0,5,0,5}
  \plotpermnobox[blue]  {}{1,0,1}
  \plotpermnobox        {}{0,0,0,0,1,0,1,0,1,0,1}
  \node at (1,-.3) {\small2};
  \node at (3,-.3) {\small5};
  \node at (5,-.3) {\small1};
  \node at (7,-.3) {\small3};
  \node at (9,-.3) {\small4};
  \node at (11,-.3) {\small8};
  \node at (1,6.3) {\small9};
  \node at (3,6.3) {\small6};
  \node at (5,6.3) {\small7};
  \end{tikzpicture}
  \hspace{.75in}
  \begin{tikzpicture}[scale=0.25]
  \setplotptradius{.35}
  \draw[thick] (4,1)--(4,5)--(8,1)--(8,5);
  \draw[thick] (8,1)--(12,5);
  \plotpermnobox[red]{}{0,0,0,5,0,0,0,5,0,0,0,5}
  \plotpermnobox[blue]  {}{0,0,0,1,0,0,0,1}
 %\plotpermnobox[gray]  {}{1,0,0,0,1,1,0,0,0,0,0,0,5}
  \setplotptradius{0.2}
  \circpt{1}{1}
  \circpt{5}{1}
  \circpt{6}{1}
  \circpt{13}{5}
  \node at (0,-.3) {\small0};
  \node at (4,-.3) {\small1};
  \node at (8,-.3) {\small2};
  \node at (4,6.3) {\small5};
  \node at (8,6.3) {\small3};
  \node at (12,6.3) {\small4};
  \end{tikzpicture}
  \caption{A \thfnl{} poset with four isolated vertices, and the corresponding decorated \thfnl{} poset with no isolated vertices}\label{fig3FreeNoIsolated}
\end{figure}

\begin{proof}
Let us say that a \emph{decorated poset} is a \thfnl{} poset with no isolated elements in which each element, including an additional invisible element~0, is decorated with a sequence of zero or more rings.
It is easy to see that \thfnl{} posets on $[n]$ with $\ell$ isolated elements are in bijection with decorated posets on $[n-\ell]$ with $\ell$ rings.
Given a \thfnl{} poset~$\PPP$, to construct the corresponding decorated poset~$\QQQ$, we simply process its elements in order.
If an element is isolated in~$\PPP$, add a ring to the previous element in~$\QQQ$; otherwise add the element to $\QQQ$ with the covering relations corresponding to those in~$\PPP$.
See Figure~\ref{fig3FreeNoIsolated} for an example.

In terms of generating functions, this means we have the following relationship between $G$ and $F$, the bivariate generating function~\eqref{eq3FreeGFDef} for all \thfnl{} posets:
\[
F(z,y) \;=\; \frac1{1-zy} \, G\!\left(\!\frac{z}{1-zy},\,y\!\right).
\]
Here, ${z}/(1-zy)$ accounts for decorated elements, and $1/(1-zy)$ for the sequence of rings on invisible element 0.
A simple change of variables then gives us $G$ in terms of~$F$:
\[
G(z,y) \;=\; \frac1{1+zy} \, F\!\left(\!\frac{z}{1+zy},\,y\!\right) .
\]
Substitution into the functional equation~\eqref{eq3FreeFuncEq} for $F$ and some algebraic rearrangement yields the functional equation for $G$~\eqref{eq3FreeNoIsolatedFuncEq}, and substitution into~\eqref{eq3FreeGF} gives the explicit formula~\eqref{eq3FreeNoIsolatedGF}.
\end{proof}

% ================================================================
\section{\thttf{} naturally labelled posets}\label{sect322Free}

A poset is said to be \ttf{} if it does not contain an \emph{induced} subposet that is isomorphic to {\bf2}+{\bf2}, the union of two disjoint 2-element chains.
For example, in Figure~\ref{fig22Free}, the poset on the left is \ttf{}, but the poset on the right is not.
See Section~\ref{sectDiscussion} below for further discussion of \ttf{} posets.
\begin{figure}[ht]
  \centering
  \begin{tikzpicture}[scale=0.3]
  \draw[white] (1,2) circle [x radius=.8,y radius=1.7];
  \draw[thick] (1,3)--(3,1)--(3,5)--(1,1)--(1,5)--(3,3);
  \plotpermnobox{}{5,0,5}
  \plotpermnobox{}{3,0,3}
  \plotpermnobox{}{1,0,1}
  \end{tikzpicture}
  \hspace{.75in}
  \begin{tikzpicture}[scale=0.3]
  \draw[thick] (1,3)--(3,1)--(3,5);
  \draw[thick] (1,1)--(1,5)--(3,3);
  \draw[dashed] (1,2) circle [x radius=.8,y radius=1.7];
  \draw[dashed] (3,4) circle [x radius=.8,y radius=1.7];
  \plotpermnobox{}{5,0,5}
  \plotpermnobox{}{3,0,3}
  \plotpermnobox{}{1,0,1}
  \end{tikzpicture}
  \caption{A \ttf{} poset and a poset with an induced {\bf2}+{\bf2} subposet}\label{fig22Free}
\end{figure}
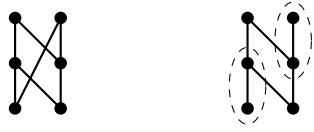

In this section we consider \thttfnl{} posets.
First, we exhibit bijections between these posets and certain matrices, certain labelled binary words, and certain bicoloured permutations.
Then we establish a bijection with permutations avoiding the vincular pattern 12--34.
We then consider the enumeration of these objects, first determining functional equations satisfied by their generating function, and then investigating their asymptotic growth.

The following proposition characterises the incidence matrices of \thttfnl{} posets.

\begin{prop}\label{prop322FreeMatrices}
The \thttf{} naturally labelled posets on $[n]$ are in bijection with upper-triangular $n\times n$ binary matrices with
each entry on the main diagonal equal to one
that contain none of the four partial submatrices
%\[
%M_0=\thfmx,
%\]
\[
M_0=\thfmx, \qquad
M_1=\pamx, \qquad %=\paamx,
M_2=\pbmx, \qquad %=\pbbmx,
M_3=\pcmx, %=\pccmx,
\]
with lower left (bold) entries on the main diagonal.
\end{prop}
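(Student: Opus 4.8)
The strategy is to layer the new condition on top of Proposition~\ref{prop3FreeMatrices}. That result already identifies the \thfnl{} posets on $[n]$ with the upper-triangular $n\times n$ binary matrices of unit diagonal that avoid $M_0=\thfmx$, so it suffices to prove that such a matrix $\mxp$ encodes a poset containing an induced {\bf2}+{\bf2} if and only if $\mxp$ contains one of $M_1$, $M_2$, $M_3$ with its two bold entries on the main diagonal. In fact I would prove this equivalence for an arbitrary \nl{} poset matrix, since the {\bf2}+{\bf2} condition is self-contained and does not interact with $\thfmx$-avoidance; the appeal to Proposition~\ref{prop3FreeMatrices} then merely supplies the $M_0$ clause.

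The first ingredient is a dictionary coming from natural labelling: if $x<y$ then $\mxp(y,x)=0$ automatically, so $x$ and $y$ are comparable exactly when $\mxp(x,y)=1$, and \emph{incomparable} exactly when $\mxp(x,y)=0$. Consequently an induced {\bf2}+{\bf2} is nothing more than a set of four elements $e_1<e_2<e_3<e_4$ together with a splitting into two $2$-chains --- that is, a partition of $\{e_1,e_2,e_3,e_4\}$ into two pairs, each pair $\{x,y\}$ with $x<y$ recording the relation $x\prec y$ (a $1$ at the above-diagonal position $(x,y)$) --- such that all four cross-pairs between the two chains are incomparable (a $0$ at each corresponding above-diagonal position). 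There are exactly three partitions of four linearly ordered points into two pairs, namely the three perfect matchings $e_1e_2\mid e_3e_4$ (separated), $e_1e_3\mid e_2e_4$ (crossing) and $e_1e_4\mid e_2e_3$ (nested). This is how the three $3\times3$ patterns encode four elements: the stipulation that the two bold entries lie on the main diagonal identifies the second row-index with the first column-index and the third row-index with the second column-index, so that $\pamx$, $\pbmx$ and $\pcmx$ each act on four distinct indices $e_1<e_2<e_3<e_4$, with the bold diagonal $1$'s sitting at the internal indices $e_2,e_3$.

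It then remains to match the three matchings with the three patterns. For each matching I would read off its two $1$-entries (the matched pairs) and four $0$-entries (the cross-pairs) and check that they coincide exactly with the entries of $M_1$, $M_2$ and $M_3$ respectively; the expanded displays $\paaamx$, $\pbbbmx$ and $\pcccmx$, whose row/column orders are precisely $e_1,e_2,e_3,e_4$ in each case, are designed to make this verification transparent. Reading such a pattern out of $\mxp$ thus produces an induced {\bf2}+{\bf2}, and conversely any induced {\bf2}+{\bf2} realises exactly one matching and hence embeds the corresponding pattern with its bold entries forced onto the diagonal. The one point that genuinely needs care is the bookkeeping in this last step: one must confirm that in each of the three matrices \emph{all four} cross-incomparabilities appear as explicit $0$'s (none being left as a ``don't-care'' blank) and that the three matchings are exhaustive and mutually exclusive, so that the correspondence between induced {\bf2}+{\bf2}'s and occurrences of $\{M_1,M_2,M_3\}$ is exact. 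Combined with Proposition~\ref{prop3FreeMatrices}, this yields the claimed characterisation.
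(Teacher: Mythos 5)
Your proposal is correct and takes essentially the same approach as the paper: it layers the ${\bf2}+{\bf2}$ condition on top of Proposition~\ref{prop3FreeMatrices}, and your three perfect matchings (separated, crossing, nested) of $e_1<e_2<e_3<e_4$ are exactly the paper's three cases $i<j<k<\ell$, $i<k<j<\ell$, $i<k<\ell<j$ obtained by assuming $i<k$ without loss of generality, matched to $M_1$, $M_2$, $M_3$ in the same way via the expanded $4\times4$ displays. Your bookkeeping check that all four cross-incomparabilities appear as explicit zeros, and that the bold diagonal entries force four distinct indices, is precisely what the paper's illustrations encode.
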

\begin{proof}
By Proposition~\ref{prop3FreeMatrices}, the incidence matrix of a \thfnl{} poset is upper-triangular with each entry on the main diagonal equal to one and has no $M_0$ partial submatrix.
Suppose a \nl{} partial order $\prec$ has an induced {\bf2}+{\bf2} subposet $i\prec j$, $k\prec\ell$ (where $i$ and $j$ are both incomparable with both $k$ and $\ell$ under~$\prec$).
Without loss of generality, we may assume that $i<k$.
Then three cases are possible:
\together3
\[
i<j<k<\ell, \qquad\qquad i<k<j<\ell, \qquad\qquad i<k<\ell<j.
\]
\[
\paaamx \qquad\qquad\qquad\quad \pbbbmx \qquad\qquad\qquad\quad \pcccmx
%\qedhere
\]
These clearly correspond precisely to an occurrence in $\mxp$ of the partial submatrices $M_1$, $M_2$, or $M_3$, respectively, as illustrated.
Thus, if a naturally labelled poset is not \ttf{}, then its incidence matrix contains one of these three partial submatrices,
and if the incidence matrix contains any one of the partial submatrices, then the corresponding poset is not \ttf{}.
\end{proof}

It is well-known (see~\cite{BMCDK2010}) that a poset is \ttf{} if and only if the set of its strict downsets can be linearly ordered by inclusion.
That is, given a poset $(P,\prec)$, if for each $x\in P$, we let $D(x)=\{t\in P : t\prec x\}$ denote the strict downset of $x$, then for any pair of elements $x,y\in P$, either $D(x)\subseteq D(y)$ or $D(y)\subseteq D(x)$.
We use this characterisation to exhibit natural bijections between \thttfnl{} posets and certain sets of labelled binary words and bicoloured permutations.

\begin{prop}\label{prop322FreeWords}
  The \thttf{} naturally labelled posets on $[n]$ are in bijection with words over $\{\mathsf0,\mathsf1\}$ of length~$n$, with the letters labelled $\mathsf1,\ldots,\mathsf{n}$ and satisfying the following four conditions:
\begin{bulletnums}
  \item the first letter is $\mathsf0$,
\item the labels on adjacent $\mathsf0$s are in {decreasing} order,
  \item the labels on adjacent $\mathsf1$s are in {increasing} order, and
  \item the label on any $\mathsf1$ is greater than the labels on all the $\mathsf0$s earlier in the word.
\end{bulletnums}
\end{prop}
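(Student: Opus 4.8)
The plan is to exhibit explicit maps in both directions and verify that they are mutually inverse, using throughout the downset characterisation of \ttf{}ness recalled above, together with \thf{}ness, which forces every strict downset to consist solely of minimal elements.

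For the map from words to posets, given an admissible labelled word $w=w_1\cdots w_n$ I read the label attached to each position as the name of a poset element, take the elements sitting on a $\mathsf0$ to be the minimal elements and those sitting on a $\mathsf1$ to be the non-minimal elements, and declare that each non-minimal element covers exactly those minimal elements whose $\mathsf0$ lies to its left in $w$. I would then check that this always produces a \thttfnl{} poset. Natural labelling is precisely condition~(4): a $\mathsf1$ covers only $\mathsf0$s that precede it, and its label exceeds all of their labels. \thf{}ness is immediate, since in the resulting Hasse diagram the $\mathsf0$s cover nothing and the $\mathsf1$s are covered by nothing, so no $3$-chain can occur. For \ttf{}ness, observe that the strict downset of the $\mathsf1$ at position $p$ is simply the set of $\mathsf0$s occurring before $p$; as $p$ increases these sets only grow, so the strict downsets form a chain under inclusion, which is exactly the required condition.

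For the inverse map, I start from a \thttfnl{} poset and list its distinct strict downsets as a chain $\emptyset=\Delta_0\subsetneq\Delta_1\subsetneq\cdots\subsetneq\Delta_r$ of sets of minimal elements. I then assemble the word in blocks: for each $j=1,\ldots,r$ I first write, in decreasing order of label, the minimal elements entering at level $j$ (those in $\Delta_j\setminus\Delta_{j-1}$) as a run of $\mathsf0$s, and then write, in increasing order of label, the non-minimal elements whose strict downset equals $\Delta_j$ as a run of $\mathsf1$s; finally I append, in decreasing order of label, the minimal elements lying in no downset at all, as a closing run of $\mathsf0$s. By construction each run of $\mathsf0$s decreases and each run of $\mathsf1$s increases, giving conditions~(2) and~(3); the word begins with the (nonempty) run of minimal elements of $\Delta_1$, or, if the poset has no non-minimal element, with the single run of all its minimal elements, so the first letter is $\mathsf0$, giving~(1); and a non-minimal element with strict downset $\Delta_j$ is preceded in the word by exactly the minimal elements of $\Delta_j$, each smaller than it by natural labelling, which establishes~(4) and at the same time shows that the covering rule of the first map recovers the original covering relations.

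The step I expect to require the most care is checking that the two maps are genuinely inverse, and in particular that conditions~(1)--(4) single out a unique word for each poset. The crux is that in any word obeying~(4) the strict downset of a given $\mathsf1$ is forced to equal the set of $\mathsf0$s preceding it, so the interleaving of $\mathsf0$s and $\mathsf1$s is completely dictated by the nested chain of strict downsets coming from \ttf{}ness, while conditions~(2) and~(3) eliminate the only remaining freedom by fixing the order of the labels within each maximal run. Confirming that these constraints leave exactly one admissible word---so that the correspondence neither overcounts nor undercounts---is where the nesting of the downsets and the monotonicity conditions have to be combined with some care.
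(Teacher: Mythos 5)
Your proposal is correct and takes essentially the same approach as the paper: both encode a poset via its nested chain of strict downsets into alternating blocks of decreasing-labelled $\mathsf0$s and increasing-labelled $\mathsf1$s (with isolated minima appended at the end), and both invert this by letting each $\mathsf1$ cover exactly the $\mathsf0$s preceding it in the word. The only difference is presentational---you construct the word-to-poset direction first and spell out the verification of {\bf3}-freeness, ({\bf2}+{\bf2})-freeness and uniqueness slightly more explicitly than the paper does.
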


\begin{proof}
  Given a \thttfnl{} poset $\PPP$, let $D_1\subsetneq D_2\subsetneq \ldots \subsetneq D_k$ be the %$k$
  {distinct} strict downsets of its non-isolated maxima.
  Each $D_i$ is a nonempty subset of the minima of $\PPP$.
  Let $E_1=D_1$, and for $i=2,\ldots,k$, let $E_i=D_i\setminus D_{i-1}$.
  Finally, let $E_{k+1}$ be the set of isolated elements of $\PPP$.
  Then the $E_i$'s form a partition of the minima of $\PPP$, with each part nonempty, except possibly~$E_{k+1}$.

  For each $i\in[k]$, let $M_i=\{x\in\PPP: D(x)=D_i\}$ be the set of maxima of $\PPP$ whose strict downset is equal to $D_i$.
  The $M_i$'s partition the non-isolated maxima of $\PPP$, and each $M_i$ is nonempty.

  Clearly $\PPP$ is defined by the sequence $(E_1,M_1,E_2,M_2,\ldots,E_k,M_k,E_{k+1})$.
  We encode this as a labelled binary word $\veps_1\mu_1\ldots\veps_k\mu_k\veps_{k+1}$, in which
  each $\veps_i$ consists of $|E_i|$ $\mathsf0$s, labelled with the elements of $E_i$ in decreasing order, and
  each $\mu_i$ consists of $|M_i|$ $\mathsf1$s, labelled with the elements of $M_i$ in increasing order\footnote{The choice of decreasing labels for minima and increasing labels for maxima is motivated by the requirements of our bijection between \thttf{} NL posets and permutations avoiding 43--12 in Proposition~\ref{propAv1234Bijection}.}, so that each subset of elements is uniquely represented and the sets comprising the sequence can be distinguished.
\begin{figure}[t]
  \centering
  \raisebox{15pt}{
  \begin{tikzpicture}[scale=0.25]
  \setplotptradius{.35}
  \draw[thick] (1,5)--(1,1);
  \draw[thick] (3,5)--(1,1) (3,5)--(3,1) (3,5)--(5,1);
  \draw[thick] (5,5)--(1,1) (5,5)--(3,1) (5,5)--(5,1) (5,5)--(7,1);
  \draw[thick] (7,5)--(1,1) (7,5)--(3,1) (7,5)--(5,1) (7,5)--(7,1);
  \plotpermnobox[red]{}{5,0,5,0,5,0,5}
  \plotpermnobox[blue]  {}{1,0,1,0,1,0,1,0,1}
  \node at (1,-.3) {\small2};
  \node at (3,-.3) {\small1};
  \node at (5,-.3) {\small5};
  \node at (7,-.3) {\small6};
  \node at (9,-.3) {\small3};
  \node at (1,6.3) {\small4};
  \node at (3,6.3) {\small8};
  \node at (5,6.3) {\small9};
  \node at (7,6.3) {\small7};
  \end{tikzpicture}
  \hspace{0.5in}
  \raisebox{27pt}{$\mathsf{0_2 1_4 0_5 0_1 1_8 0_6 1_7 1_9 0_3}$}
  }
  \hspace{0.5in}
  \begin{tikzpicture}[scale=0.25]
  \plotperm[red]{9}  {0,4,0,0,8,0,7,9}
  \plotpermnobox[blue]{}{2,0,5,1,0,6,0,0,3}
  \small
  \node at (2,-.6) {\color{red}4};
  \node at (5,-.6) {\color{red}8};
  \node at (7,-.6) {\color{red}7};
  \node at (8,-.6) {\color{red}9};
  \node at (1,-.6) {\color{blue}2};
  \node at (3,-.6) {\color{blue}5};
  \node at (4,-.6) {\color{blue}1};
  \node at (6,-.6) {\color{blue}6};
  \node at (9,-.6) {\color{blue}3};
  \end{tikzpicture}
  \caption{The Hasse diagram of a \thttfnl{} poset, and the corresponding labelled binary word and bicoloured permutation}\label{fig322Free}
\end{figure}
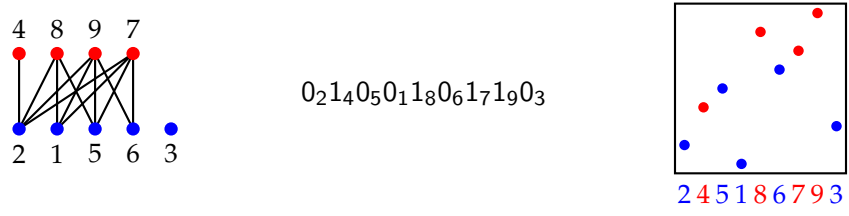

\together7
For example, for the poset on the left in Figure~\ref{fig322Free}, we have
  \begin{center}
    \begin{tabular}{llll}
      $E_1=\{2\}$,   & $\veps_1=\mathsf{0_2}$,    & $M_1=\{4\}$,   & $\mu_1=\mathsf{1_4}$,      \\[2pt]
      $E_2=\{1,5\}$, & $\veps_2=\mathsf{0_50_1}$, & $M_2=\{8\}$,   & $\mu_2=\mathsf{1_8}$,      \\[2pt]
      $E_3=\{6\}$,   & $\veps_3=\mathsf{0_6}$,    & $M_3=\{7,9\}$, & $\mu_3=\mathsf{1_71_9}$,   \\[2pt]
      $E_4=\{3\}$,   & $\veps_4=\mathsf{0_3}$,
    \end{tabular}
  \end{center}
  yielding the word in the centre of the figure.

  Any word created in this way satisfies the four conditions in the statement of the proposition:
  Condition 1 holds because $E_1$ is nonempty,
  conditions 2 and 3 hold as a result of our choices for the ordering of labels,
  and condition 4 holds because $\PPP$ is naturally labelled.

  Moreover, a unique poset can be built from any word satisfying the four conditions.
  Reading from left to right, $\textsf0_p$ adds a new (initially isolated) minimal element $p$,
  whereas $\textsf1_q$ adds a new maximal element $q$ covering all existing minima.
\end{proof}

A reinterpretation of these words yields a bijection with certain bicoloured permutations.
An \emph{inversion} in a permutation consists of two values (not necessarily consecutive), the larger of which occurs first.
A \emph{descent} is an inversion consisting of two consecutive values.
An \emph{ascent} consists of two consecutive values, the smaller of which occurs first.

\begin{prop}\label{prop322FreePerms}
  The \thttf{} naturally labelled posets on $[n]$ are in bijection with permutations of length~$n$, each entry of which is coloured \oo{} or \ii, whose
  first entry is~\oo{}, and which avoid \oo-\oo{} ascents, \ii-\ii{} descents, and \oo{}--\ii{} inversions.
\end{prop}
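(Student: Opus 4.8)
The plan is to obtain the desired bijection by \emph{reinterpreting} the labelled binary words of Proposition~\ref{prop322FreeWords}, rather than constructing anything new. Given such a word $w=\veps_1\mu_1\cdots\veps_k\mu_k\veps_{k+1}$ of length $n$ whose letters are labelled by a permutation of $[n]$, I would read off a bicoloured permutation $\pi$ by letting $\pi_i$ be the label of the $i$th letter of $w$, coloured \oo{} if that letter is $\mathsf0$ and \ii{} if it is $\mathsf1$. Since the labels of $w$ already form a permutation of $[n]$ and each position carries a single binary letter, this assignment is patently a bijection between \emph{all} length-$n$ labelled binary words (with labels a permutation of $[n]$) and \emph{all} bicoloured permutations of $[n]$: no information is lost or added. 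Composing with Proposition~\ref{prop322FreeWords}, it then suffices to verify that the four conditions defining an admissible word match exactly the conditions defining an admissible bicoloured permutation.

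Translating the conditions one at a time is the crux of the argument. Condition~1 (the first letter is $\mathsf0$) is literally the requirement that the first entry of $\pi$ be \oo. For condition~2, two $\mathsf0$s are adjacent in $w$ precisely when two \oo{} entries occupy consecutive positions of $\pi$; as the values of $\pi$ are distinct, any such pair is either an ascent or a descent, so ``adjacent $\mathsf0$s have decreasing labels'' is equivalent to ``$\pi$ has no \oo-\oo{} ascent.'' The mirror argument turns condition~3 into the absence of \ii-\ii{} descents. Finally, condition~4 asserts that every $\mathsf1$-label exceeds every $\mathsf0$-label lying earlier in $w$; a violation is exactly a \oo{} entry followed, at any distance, by a \ii{} entry of smaller value, that is, a \oo{}--\ii{} inversion, so condition~4 is equivalent to the avoidance of \oo{}--\ii{} inversions. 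Here the single-dash versus double-dash notation usefully records that the ascent and descent constraints apply only to \emph{adjacent} positions whereas the inversion constraint applies at arbitrary distance.

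The reverse direction is then automatic: from any bicoloured permutation satisfying the first-entry condition and the three avoidances, the inverse reading recovers a word satisfying conditions~1--4, to which Proposition~\ref{prop322FreeWords} associates a unique \thttfnl{} poset. I do not anticipate a serious obstacle, since the map is a relabelling rather than a genuine construction; the only points demanding care are (a)~confirming that ``adjacent identical letters'' in $w$ correspond to \emph{consecutive} equally-coloured entries of $\pi$ (and not merely to entries that are consecutive within the subsequence of one colour, which would force a far stronger, and false, global monotonicity), and (b)~fixing the orientation of the \oo{}--\ii{} inversion---namely that the \oo{} entry is the earlier and larger one---so that condition~4 is matched and the \ii{}-before-\oo{} inversions are correctly left unconstrained.
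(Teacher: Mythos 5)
Your proposal is correct and takes essentially the same route as the paper: the paper likewise obtains the bicoloured permutations as a direct reinterpretation of the labelled binary words of Proposition~\ref{prop322FreeWords} (values given by the labels, colours \oo{}/\ii{} given by the letters $\mathsf0$/$\mathsf1$), with the four word conditions corresponding one-to-one to the first-entry condition and the three forbidden patterns. Your careful points (a) and (b) about adjacency versus arbitrary distance and the orientation of the \oo{}--\ii{} inversion are exactly the correspondences the paper asserts, just spelled out in more detail.
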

\begin{proof}
  These bicoloured permutations are simply an alternative representation of the labelled binary words of Proposition~\ref{prop322FreeWords},
  formed by switching the role of letters and labels and using the colours \oo{} and \ii{} for $\mathsf0$ and  $\mathsf1$, respectively.
  See Figure~\ref{fig322Free} for an example.
  The four conditions on the colouring of entries precisely correspond to the four conditions in Proposition~\ref{prop322FreeWords} on the labelled binary words.
\end{proof}

\together5
\subsection{Bijection with permutations avoiding 12--34}

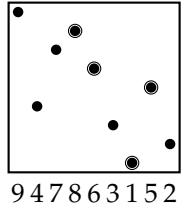
\begin{figure}[t]
  \centering
  \begin{tikzpicture}[scale=0.25]
  \plotperm{9}{9,4,7,8,6,3,1,5,2}
  \circpt48
  \circpt56
  \circpt71
  \circpt85
  \small
  \node at (1,-.6) {9};
  \node at (2,-.6) {4};
  \node at (3,-.6) {7};
  \node at (4,-.6) {8};
  \node at (5,-.6) {6};
  \node at (6,-.6) {3};
  \node at (7,-.6) {1};
  \node at (8,-.6) {5};
  \node at (9,-.6) {2};
  \end{tikzpicture}
  \caption{The single occurrence of the pattern 43--12 in the permutation 947863152}\label{figVincular}
\end{figure}

A \emph{vincular pattern} is a permutation together with adjacency conditions for its containment.
We use the traditional notation in which a vincular pattern is written as a permutation with dashes inserted between
terms that need not be adjacent but no dashes between terms that must be adjacent.
For example, the permutation $\sigma$ contains the vincular pattern 43--12 if there exist indices $i<j$ such that
%$\sigma(j)<\sigma(j+1)<\sigma(i+1)<\sigma(i)$.
$\sigma(i) > \sigma(i+1) > \sigma(j+1) > \sigma(j)$.
The permutation in Figure~\ref{figVincular} contains a single occurrence of 43--12.
Vincular patterns were introduced in~\cite{BS2000} (under another name) and first called vincular patterns in~\cite{BMCDK2010}.
%see~\cite{BP2012} for enumeration schemes;
%see~\cite{BS2015} for Wilf-equivalences

We also require the ability to specify a pattern in which the first term must occur at the start of the host permutation.
We denote this with an initial bracket.
For example, the permutation~$\sigma$ contains the pattern \ipat{} if there exists an index $i$ such that
%$\sigma(i)<\sigma(i+1)<\sigma(1)$.
$\sigma(1) > \sigma(i+1) > \sigma(i)$.
The permutation in Figure~\ref{figVincular} contains two occurrences of \ipat{}, formed by 947 and 915.

\together5
In this section we prove that \thttfnl{} posets have the same counting sequence as permutations avoiding the vincular pattern 12--34.
The permutations avoiding 12--34 were first studied by Elizalde~\cite[Section~5]{Elizalde2006}, who, among other things,
established that 12--34-avoiders are equinumerous with both 12--43-avoiders and 21--43-avoiders (and their symmetries).\footnote{That is, 12--34, 12--43 and 21--43 are \emph{Wilf equivalent}.}

\begin{prop}[{see~\cite[Propositions 5.2 and 5.3]{Elizalde2006}}]\label{propAv1234Sym}
The class $\av(\text{\textup{12--34}})$ of permutations avoiding the vincular pattern \textup{12--34} is equinumerous with both $\av(\text{\textup{12--43}})$ and $\av(\text{\textup{21--43}})$, and also with the five other symmetries of these three classes.
\end{prop}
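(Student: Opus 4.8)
The plan is to separate the eight equinumerous classes into those related by \emph{trivial} symmetries of $S_n$ and those whose equivalence is genuinely substantive. The claim really packages together two different kinds of statement: (a) equivalences that follow from elementary symmetries, and (b) the three-way Wilf equivalence of the pairwise-inequivalent patterns 12--34, 12--43 and 21--43, which is Elizalde's theorem. So I would begin by pinning down exactly which of the eight classes arise by symmetry from each of the three named patterns, and thereby reduce the whole proposition to statement (b) plus bookkeeping.

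For the symmetry part, let $r$ (reverse) and $c$ (complement) be the usual involutions on $S_n$, where $r(\sigma)(i)=\sigma(n+1-i)$ and $c(\sigma)(i)=n+1-\sigma(i)$. The key point, which I would isolate as a one-line lemma, is that both $r$ and $c$ preserve position-adjacency: $r$ merely reverses the order of positions, and $c$ fixes positions entirely. Hence if $\sigma$ contains a vincular pattern $p$ with a prescribed dash structure, then $r(\sigma)$ contains $r(p)$ and $c(\sigma)$ contains $c(p)$, with the same blocks forced adjacent; since $r$ and $c$ are bijections of $S_n$, this yields $|\av_n(p)|=|\av_n(r(p))|=|\av_n(c(p))|=|\av_n(rc(p))|$ for every $n$. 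I would explicitly note that the inverse map is \emph{excluded} here, since inversion interchanges position-adjacency with value-adjacency and so does not act within the class of purely vincular patterns.

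Applying the Klein four-group $\{e,r,c,rc\}$ to each named pattern is then pure bookkeeping. I would record the three orbits: $\{\text{12--34},\,\text{43--21}\}$ of size $2$; $\{\text{12--43},\,\text{34--21},\,\text{43--12},\,\text{21--34}\}$ of size $4$; and $\{\text{21--43},\,\text{34--12}\}$ of size $2$. These account for $2+4+2=8$ distinct patterns, namely the three named representatives together with five others, matching the statement precisely. At this point the only thing left to establish is that the three representatives are mutually Wilf equivalent.

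The real content, and the main obstacle, is the three-way equivalence $|\av_n(\text{12--34})|=|\av_n(\text{12--43})|=|\av_n(\text{21--43})|$. Here a naive ``flip an adjacent block'' map fails, since reversing the relative order of a forced-adjacent pair within one occurrence can create or destroy occurrences elsewhere in the permutation, so a carefully constructed global bijection (or a common generating function) is required. A from-scratch attack would likely proceed by a structural decomposition of each avoidance class according to the positions of its adjacent ascending/descending pairs, tracking enough statistics to set up a size-preserving correspondence; this is exactly the nontrivial step. Rather than reproduce it, I would obtain these equivalences from Elizalde's analysis (his Propositions~5.2 and~5.3). Combining Elizalde's three-way equivalence with the symmetry bookkeeping of the previous paragraph then yields the full statement.
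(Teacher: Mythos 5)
Your proposal is correct and takes essentially the same route as the paper: the paper offers no proof of its own, simply importing the substantive three-way Wilf equivalence of 12--34, 12--43 and 21--43 from Elizalde's Propositions 5.2 and 5.3, exactly as you do. Your reverse/complement bookkeeping --- the orbits $\{$12--34, 43--21$\}$, $\{$12--43, 34--21, 43--12, 21--34$\}$ and $\{$21--43, 34--12$\}$, which supply the five ``other symmetries,'' with inversion rightly excluded since it does not preserve vincular adjacency --- is accurate and merely makes explicit what the paper leaves implicit.
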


In light of this, the heart of our proof consists of the establishment of a bijection between the bicoloured permutations of $[n]$ defined in Proposition~\ref{prop322FreePerms} and permutations of $[n]$ avoiding 43--12.

Let $\BBB$ denote the set of bicoloured permutations defined in Proposition~\ref{prop322FreePerms}.
Observe that the runs (maximal sequences of ascending or descending points) of any $\beta\in\BBB$ are coloured as follows:
\begin{bullets}
  \item The first point in an \emph{ascending} run of $\beta$ is \oo{} (since the first point of $\beta$ is \oo{}, and \ii-\ii{} descents and \oo{}--\ii{} inversions are forbidden). Subsequent points may have either colour, but a pair of consecutive points cannot both be \oo{} (since \oo-\oo{} ascents are forbidden). In particular, the second point of an ascending run is \ii{}.
  \item The first point in a \emph{descending} run of $\beta$ may have either colour, but subsequent points are all \oo{} (avoiding \ii-\ii{} descents and \oo{}--\ii{} inversions).
\end{bullets}

We define $\Lambda$ to be the following length-preserving map from $\BBB$ to uncoloured permutations:
\begin{bullets}
  \item If $\beta$ is a bicoloured permutation that doesn't contain a \io{} ascent, then $\Lambda(\beta)$ is simply the permutation that results from removing the colours from~$\beta$.
  \item\label{LambdaB1} If $\beta$ contains one or more \io{} ascents, then for each such ascent $PQ$, we \emph{mark} the point $Q$ and any points in a descending run beginning with $Q$ that lie above the \ii{} point~$P$. All these points are \oo{}.
      In this case, $\Lambda(\beta)$ is the uncoloured permutation that results from moving all the marked points to the start of the permutation, placing them in increasing order.
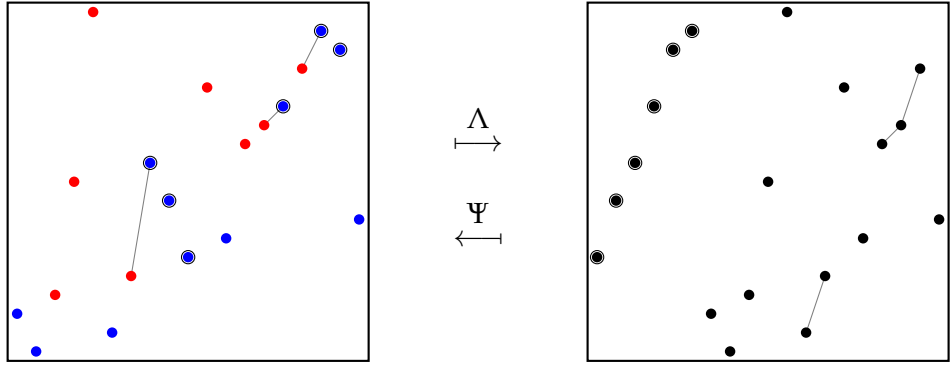
\begin{figure}[t]
\begin{center}
  \begin{tikzpicture}[scale=0.25]
    \draw[gray] (7,5)--(8,11);
    \draw[gray] (14,13)--(15,14);
    \draw[gray] (16,16)--(17,18);
    \plotperm[blue]       {19}{3,1,0,00,00,2,0,11,9,6,00,7,00,00,14,00,18,17,8}
    \plotpermnobox[red]{19}{0,0,4,10,19,0,5,00,0,0,15,0,12,13,00,16,00,00,0}
    \circpt{8}{11}
    \circpt{9}{9}
    \circpt{10}{6}
    \circpt{15}{14}
    \circpt{17}{18}
    \circpt{18}{17}
  \end{tikzpicture}
  \raisebox{66pt}{$\qquad\begin{tabular}{c} $\Lambda$ \\[-5pt] $\longmapsto$ \\ ~ \\ $\Psi$ \\[-5pt] $\longmapsfrom$ \end{tabular}
  \qquad$}
  \begin{tikzpicture}[scale=0.25]
    \draw[gray] (16,12)--(17,13)--(18,16);
    \draw[gray] (12,2)--(13,5);
    \plotperm       {19}{6,9,11,14,17,18,3,1,0,00,00,2,0,00,7,00,00,00,8}
    \plotpermnobox  {19}{0,0,00,00,00,00,0,0,4,10,19,0,5,15,0,12,13,16,0}
    \circpt{3}{11}
    \circpt{2}{9}
    \circpt{1}{6}
    \circpt{4}{14}
    \circpt{6}{18}
    \circpt{5}{17}
  \end{tikzpicture}
\end{center}
\caption{A bicoloured permutation
in $\BBB$,
%avoiding 43--12 and \ipat{},
and the 43--12 avoider that corresponds to it under the bijection $\Lambda$}
\label{figLambda}
\end{figure}
See Figure~\ref{figLambda} for an example; on the left, \io{} ascents are indicated with a grey line, and the marked points are circled.
\end{bullets}

We claim that the map $\Lambda$ is a bijection from $\BBB$ to $\av(\text{\textup{43--12}})$.

  We begin with some further properties of elements of $\BBB$.
  Suppose $\beta\in\BBB$.
  Then $\beta$ avoids 43--12, because the lower point in each descent of $\beta$ is coloured \oo{} and at least one of the points in each ascent of $\beta$ is \ii{}, so any 43--12 would contain a forbidden \oo{}--\ii{} inversion, realized by the 3 and one of the points of the 12.
  Moreover, $\beta$ also avoids the pattern \ipat{}, since the first point of $\beta$ is \oo{},
  so any \ipat{} would contain a forbidden \oo{}--\ii{} inversion, again realized by the 3 and one of the points of the 12.
  Thus, the removal of the colours from $\beta$ yields a permutation in $\AAA_0 := \av\!\big(\text{43--12}, \text{\ipat{}}\big)$.

  Let $\BBB_0$ consist of those elements of $\BBB$ that don't contain a \io{} ascent.
  Recall that $\Lambda$ simply removes the colours from bicoloured permutations in $\BBB_0$.
  We now show that $\Lambda$ maps $\BBB_0$ bijectively to~$\AAA_0$. %$\av\!\big(\text{43--12}, \text{\ipat{}}\big)$.

  Suppose $\beta\in\BBB_0$, so $\beta$ contains no \io{} ascent.
  Then the points must be coloured as follows:
  The first point of $\beta$ is blue.
  In an ascending run of $\beta$, the first point is blue, and all subsequent points are red.
  In a descending run of $\beta$, the first point is either the first point of $\beta$ (coloured blue) or else the last point of an ascending run (coloured red),
  and all subsequent points are blue.
  No alternative colouring is possible.
  Thus, $\Lambda$ maps $\BBB_0$ injectively into~$\AAA_0$. %$\av\!\big(\text{43--12}, \text{\ipat{}}\big)$.

  On the other hand, suppose $\sigma\in\AAA_0$. % $\sigma\in\av\!\big(\text{43--12}, \text{\ipat{}}\big)$.
  Then the points of $\sigma$ can be coloured to produce an element of $\BBB_0$, % that doesn't contain a \io{} ascent,
  by colouring the first point and the non-initial points of descending runs \oo{}, and all the other points (the non-initial points of ascending runs) \ii{}.
  This clearly avoids \oo-\oo{} ascents, \io{} ascents and \ii-\ii{} descents.
  And if the result contained a \oo{}--\ii{} inversion $P$--$Q$, then $P$ would be either the first point of $\sigma$ or the second point of a descent and $Q$ would be the second point in an ascent, so $\sigma$ would contain either a \ipat{} or a 43--12.
  So, the restriction of $\Lambda$ to $\BBB_0$ is a bijection between $\BBB_0$ and~$\AAA_0$. % $\av\!\big(\text{43--12}, \text{\ipat{}}\big)$.

  We now consider the effect of $\Lambda$ on bicoloured permutations that contain at least one \io{} ascent.
  Let $\BBB_1=\BBB\setminus\BBB_0$ be the set consisting of such bicoloured permutations.
  Also, let $\AAA_1 = \av(\text{43--12})\setminus\AAA_0$ be comprised of those 43--12 avoiders that contain at least one occurrence of \ipat{}.
  We need to show that $\Lambda$ maps $\BBB_1$ bijectively to~$\AAA_1$.

  Recall %from page~\pageref{LambdaB1}
  the effect of $\Lambda$ on elements of $\BBB_1$:
  For each \io{} ascent $PQ$,
  the upper point~$Q$ is marked along with any points in a descending run beginning with $Q$ that lie above $P$.
  Then all the marked points are moved to the start of the permutation, placed in increasing order.

  Suppose that $\beta\in\BBB_1$, so $\beta$ contains a \io{} ascent.
  We first establish that $\Lambda(\beta)$ avoids 43--12.
  Recall that $\beta$ avoids both 43--12 and \ipat{}.

  The marked points of $\Lambda(\beta)$ form its initial ascending run (which may be a single point), since the first point of $\beta$ is below the lowermost marked point --- which becomes the first point of~$\Lambda(\beta)$; otherwise there would be a \oo{}--\ii{} inversion in $\beta$, formed of its first point and the lower point of the leftmost \io{} ascent.

  Thus, $\Lambda(\beta)$ avoids 43--12:
\begin{bullets}
      \item[(a)] If the descent created in $\Lambda(\beta)$ by the uppermost marked point and the first point of $\beta$ were to form the 43 of a 43--12, then $\beta$ would have contained a \ipat{}.
    \item[(b)] If the deletion of a descending run of marked points were to create a descent that formed the 43 of a 43--12 in $\Lambda(\beta)$, then the lowest of these marked points would have been the first point of a 43--12 in~$\beta$.
    \item[(c)] If the deletion of a descending run of marked points were to create an ascent that formed the 12 of a 43--12 in $\Lambda(\beta)$, then the lowest of these marked points would have been the third point of a 43--12 in~$\beta$.
\end{bullets}
We now establish that $\Lambda(\beta)$ contains \ipat{}.

The points in all the \io{} ascents of $\beta$ form an increasing subsequence; otherwise $\beta$ would contain a \oo{}--\ii{} inversion.
Each \io{} ascent of $\beta$ consists of the upper two points of a \oo-\ii-\oo{} or \ii-\ii-\oo{} double ascent, since a point coloured \ii{} can't be the first point in an ascending run.
Thus, $\Lambda(\beta)$ contains a \ipat{}, formed from the lowest marked point and the lower two points of the leftmost \oo-\ii-\oo{} or \ii-\ii-\oo{} double ascent.
Hence, $\Lambda$~maps $\BBB_1$ to~$\AAA_1$.

Our goal now is to construct a map from $\AAA_1$ to $\BBB_1$ and to prove that it is in fact the partial inverse of~$\Lambda$.

  Suppose $\sigma\in\AAA_1$.
  For each point $P$ in the initial ascending run of $\sigma$ (which may be a single point), let $\alpha(P)$ be the ascent in the rightmost occurrence
  of a 3--12 pattern in $\sigma$ whose first point is~$P$.
  Note that $\alpha$ is properly defined if and only if $\sigma$ contains \ipat{}; otherwise $\alpha(P)$ would be undefined if $P$ were the first point of~$\sigma$.

  We now define $\Psi$ to be the following length-preserving map from $\AAA_1$ to the set of bicoloured permutations:
  Define $\Psi(\sigma)$ to be the bicoloured permutation that results from moving each point $P$ from the initial ascending run of $\sigma$
  so that, for each ascent $QR$ in the image of~$\alpha$, the points in its preimage $\alpha^{-1}(QR)$ are placed in decreasing order immediately after~$R$.
  All the moved points are coloured \oo{}, as is the first point of $\Psi(\sigma)$ and non-initial points of descending runs; the unmoved non-initial points of ascending runs are coloured \ii{}.
  See Figure~\ref{figLambda} for an example; at the right, the points in the initial run are circled and the ascents in the image of $\alpha$ are indicated with a grey line.

  We first establish that $\Psi(\sigma)\in\BBB_1$.
  The colouring clearly avoids \oo-\oo{} ascents and \ii-\ii{} descents.
  If $\Psi(\sigma)$ were to contain a \oo{}--\ii{} inversion $P$--$R$, then $R$ would be the second point in an ascent $QR$, and $P$ would be an unmoved point, because a moved point would form a 3--12 with $QR$ in $\sigma$ and so be moved after $R$ by~$\Psi$.
  Thus, either
  \begin{bullets}
  \item $P$ would be the first point of $\Psi(\sigma)$, in which case $\sigma$ would contain a 43--12 pattern \mbox{$SP$--$QR$}, where $S$ could be any point in the initial ascending run of~$\sigma$, or else
  \item
  $P$ would be the second (unmoved) point of a descent~$SP$, in which case \mbox{$SP$--$QR$} would form a 43--12 in~$\sigma$.
  \end{bullets}
  Hence, $\Psi(\sigma)$ avoids \oo{}--\ii{} inversions.
  Moreover, $\Psi(\sigma)$ contains one \io{} ascent for each ascent in the image of~$\alpha$.
  Hence, $\Psi(\sigma)\in\BBB_1$ as claimed.

  Finally, we show that $\Psi$ is the inverse of the restriction of $\Lambda$ to~$\BBB_1$.

  Suppose $QR$ is in the image of~$\alpha$, and $P$ is the uppermost point of $\alpha^{-1}(QR)$.
  Then $RP$ forms a \io{} ascent in $\Psi(\sigma)$, with the remaining points of $\alpha^{-1}(QR)$ in a descending run beginning with $P$ and lying above~$R$.
  This is the only way that \io{} ascents are formed in $\Psi(\sigma)$.
  So~$\Lambda(\Psi(\sigma))=\sigma$.

  Suppose now that $\beta\in\BBB_1$ and $RP$ is a \io{} ascent in $\beta$. Then $R$ is the upper point of an ascent $QR$ in $\Lambda(\beta)$ because neither $R$ nor the first point in the ascending run containing $R$ is moved by~$\Lambda$.
  Moreover, there is no ascent below and to the right of $P$ in $\beta$, otherwise $\beta$ would contain a \oo{}--\ii{} inversion $PS$, since the second point in any ascending run is \ii{}.
  Thus $QR$ is the rightmost ascent that forms a 3--12 pattern in $\Lambda(\beta)$ with the moved point $P$, and also with any other moved points from the descending run beginning with $P$ that lie above~$R$.
  So~$\Psi(\Lambda(\beta))=\beta$.

  Hence, $\Psi$~is the partial inverse of $\Lambda$, and thus $\Lambda$ is a bijection from $\BBB$ to $\av(\text{\textup{43--12}})$, as claimed.
  By Propositions~\ref{prop322FreePerms} and~\ref{propAv1234Sym}, this is sufficient to prove that class of \thttfnl{} posets is equinumerous with $\av(\text{12--34})$.

\begin{prop}\label{propAv1234Bijection}
  The \thttf{} naturally labelled posets on $[n]$ are in bijection with permutations of length~$n$ avoiding the vincular pattern \textup{12--34}.
\end{prop}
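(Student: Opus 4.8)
The plan is to assemble the pieces already established into a single chain of bijections; since all the substantive combinatorics has been carried out above, the proof of the proposition itself is a short synthesis rather than new work.

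Concretely, I would compose three maps. First, Proposition~\ref{prop322FreePerms} gives a length-preserving bijection between \thttfnl{} posets on $[n]$ and the set $\BBB$ of bicoloured permutations of $[n]$. Second, the map $\Lambda$ verified above is a length-preserving bijection from $\BBB$ onto $\av(\text{43--12})$, with inverse obtained by deleting colours on $\BBB_0$ and applying $\Psi$ on $\BBB_1$. Composing these two already produces an explicit, length-preserving bijection between \thttfnl{} posets on $[n]$ and the 43--12-avoiding permutations of length~$n$, which is the genuine content of the statement. Finally, Proposition~\ref{propAv1234Sym} records that 43--12, being one of the eight symmetry-related patterns Wilf-equivalent to 12--34, is such that the number of 43--12-avoiding permutations of length $n$ equals the number of 12--34-avoiding permutations of length $n$. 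As the sets at each stage are finite and of equal cardinality, the composite yields a bijection between \thttfnl{} posets on $[n]$ and $\av(\text{12--34})$, as required.

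The only point worth flagging --- and it is not really an obstacle, since the delicate case analysis is all upstream --- is that this last step is not a trivial symmetry: 43--12 lies in the symmetry class of 12--43, not that of 12--34, so the equality of the two counting sequences rests on Elizalde's (nontrivial) Wilf-equivalence, recorded in Proposition~\ref{propAv1234Sym}, rather than on a reverse or complement of the permutation. Accordingly, the clean structure-preserving bijection produced here is the one with $\av(\text{43--12})$; the statement with $\av(\text{12--34})$ is best read as an equinumerosity promoted to a bijection of finite sets, or, if an explicit map all the way to 12--34 is desired, as the composition of our bijection with the Wilf-equivalence bijection of~\cite{Elizalde2006}.
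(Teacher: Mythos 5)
Your proposal is correct and follows exactly the paper's own route: the paper likewise composes Proposition~\ref{prop322FreePerms} with the bijection $\Lambda$ onto $\av(\text{43--12})$ and then invokes Proposition~\ref{propAv1234Sym} (Elizalde's Wilf equivalence) to pass to $\av(\text{12--34})$. Your remark that 43--12 lies in the symmetry class of 12--43 rather than of 12--34, so the final step is a genuine Wilf equivalence and not a symmetry, is accurate and consistent with how the paper frames the conclusion.
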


% ----------------------------------------------------------------
\subsection{Enumerating 12--34-avoiding permutations}

In this section, we consider the enumeration of 12--34-avoiders, and hence also of \thttfnl{} posets.
First we present functional equations satisfied by their generating function.
Then we give a lower bound on the %factorial growth rate
exponential term in the asymptotics
of $\av(\text{12--34})$, and
use methods of series analysis to investigate its asymptotic growth, resulting in a conjectured form for the asymptotics.

\together7
\begin{prop}\label{propAv1234FuncEq}
Suppose $H(z,x,y)$ is the trivariate generating function in which the coefficient of $z^n x^k y^\ell$ is the number of permutations of length $n$ avoiding \textup{12--34} with lowest ascent top $k$ and last entry~$\ell$.
We take $k=n+1$ for the decreasing $n$-permutation (including the permutation of length 1).
Then $H(z,x,y)=H_1(z,x,y)+H_2(z,x,y)$, where
\begin{align*}
H_1(z,x,y) & \;=\; zxy\Big( x + \frac1{1-y}\big( H_1(z,x,1) - H_1(z,x,y) + H_2(z,x,1) - H_1(z,xy,1) \big) \\
& \qquad\qquad\qquad + \frac1{1-xy}\big( H_1(z,1,xy) - H_1(z,xy,1) \big) \Big) , \\
H_2(z,x,y) & \;=\;  \frac{zy}{1-y} \big( H_1(z, x y, 1) - y H_1(z y, x, 1) + H_2(z, x y, 1) - H_2(z, x, y)  \big) .
\end{align*}
\end{prop}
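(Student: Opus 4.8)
The plan is to build a generating tree on $\av(\text{12--34})$ in which every nonempty avoider of length $n$ arises from a unique avoider of length $n-1$ by appending a new rightmost entry and re-standardising, so that inserting the value $v$ shifts every existing entry that is $\ge v$ up by one. Appending on the right creates at most one new adjacency, so the three statistics recorded by $H$ --- length (marked by $z$), lowest ascent top (marked by $x$) and last entry (marked by $y$) --- transform in a controlled way. I would split $H = H_1 + H_2$ according to whether the lowest ascent top is at least the last entry ($H_1$) or strictly below it ($H_2$); equivalently, $H_1$ collects the avoiders that admit an ascending extension on the right, and $H_2$ those that do not. The length-one permutation, for which $k = n+1 = 2 > 1 = \ell$, lies in $H_1$ and contributes the base term $zx^2y = zxy\cdot x$.

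The combinatorial heart is the extension analysis. Write $\hat\ell$ and $\hat k$ for the last entry and lowest ascent top of the parent $\hat\sigma$. If $v \le \hat\ell$ the new last step is a descent; the only new adjacency is then a descent, so the appended entry cannot be the top of the second ascent of a 12--34, and every such extension is allowed. If $v > \hat\ell$ the new last step is an ascent with bottom $\hat\ell$, and this produces a 12--34 exactly when some earlier ascent has top below $\hat\ell$, that is, iff $\hat k < \hat\ell$; hence an ascending extension is permitted precisely when $\hat\sigma \in H_1$. In each permitted case the child's new last entry is $v$, while its new lowest ascent top is obtained from $\hat k$ by the re-standardisation shift (it increases by one when $\hat k \ge v$) together with, in the ascending case, the freshly created ascent top $v$. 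Comparing $v$ with $\hat k$ then decides whether the child lies in $H_1$ or $H_2$, which is what subdivides the descent and ascending families into their final contributions.

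Translating each contribution into generating functions is then routine: for a fixed parent the admissible values of $v$ form an interval, so summing $y^v$ or $(xy)^v$ over it gives a geometric series whose endpoints are controlled by $\hat\ell$ and $\hat k$. This is the source of the evaluations of $H_1$ and $H_2$ at $(z,x,1)$, $(z,x,y)$, $(z,xy,1)$ and $(z,1,xy)$, and of the denominators $1-y$ and $1-xy$. The one subtlety is the subcase of an ascending extension whose child lies in $H_2$: there $v$ ranges up to $n = |\hat\sigma| + 1$, so the upper endpoint depends on the length, and extracting it turns a factor $y^{\,n}$ into the substitution $z \mapsto zy$, which is exactly the origin of the $H_1(zy,x,1)$ term. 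Collecting the contributions by the class of the child then yields the stated pair of equations.

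I expect the main obstacle to be the careful bookkeeping of the lowest ascent top through re-standardisation --- the unit shift of those ascent tops that are $\ge v$, the appearance of a brand-new ascent top $v$, and the sentinel convention $k = n+1$ for decreasing permutations --- together with getting every interval endpoint, and hence the precise arguments of $H_1$ and $H_2$ in each term, correct. I would guard against off-by-one and sign slips by checking the resulting system against the coefficients of $H$ computed by hand for all permutations of length at most three or four.
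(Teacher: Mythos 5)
Your proposal is correct and is essentially the paper's own proof: the same generating tree formed by appending a rightmost entry, the same split of $H$ into $H_1$ (lowest ascent top $k \geq \ell$) and $H_2$ ($k < \ell$) with ascending extensions permitted exactly for $H_1$-parents, and the same geometric-series translation, including the length-dependent endpoint that produces the $y\,H_1(zy,x,1)$ term. One remark: carrying out your bookkeeping carefully, the descent extensions of $H_2$-parents contribute $\frac{zxy}{1-y}\big(H_2(z,x,1)-H_2(z,xy,1)\big)$, so the term $H_1(z,xy,1)$ inside the $1/(1-y)$ bracket of the stated $H_1$ equation should read $H_2(z,xy,1)$ --- evidently a typo in the proposition as printed (the literal equation already yields a non-polynomial coefficient at $z^2$), and precisely the kind of slip that the hand-computed coefficient check you propose would catch.
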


\begin{proof}
  We use a generating tree approach (also sometimes known as the ECO method~\cite{BDelLPP1999}).
  See~\cite{Elizalde2007} for the enumeration of several subfamilies of 12--34 avoiding permutations using generating trees.
  We model the process of extending a 12--34-avoider by inserting a new point at the right.
  There are two cases, depending on the relative positions of the lowest ascent top and the last entry.

  If the last entry is not above the lowest ascent top, then there is no restriction on the value of the inserted point.
  This first case is enumerated by~$H_1$.

  On the other hand, if the last entry is above the lowest ascent top, then a new point cannot be inserted above the last entry, or a 12--34 would be created.
  This second case is enumerated by~$H_2$.

  \together9
  This process thus gives rise to the following transition rules, as illustrated in Figure~\ref{figAv1234}:
\[
(k,\ell) \;\longmapsto\; \begin{cases}
                           (k+1,j), & \text{\phantom{$bcde$}$(a)$ ~if $\ell\leqslant k$, for~} j=1,\,\ldots,\,\ell  , \\
                           (j,j),   & \text{\phantom{$bcde$}$(b)$ ~if $\ell\leqslant k$, for~} j=\ell+1,\,\ldots,\,k , \\
                           (k,j),   & \text{\phantom{$abde$}$(c)$ ~if $\ell\leqslant k$, for~} j=k+1,\,\ldots,\,n+1   , \\[3pt]
                           (k+1,j), & \text{\phantom{$abce$}$(d)$ ~if $\ell>k$, for~} j=1,\,\ldots,\,k   , \\
                           (k,j),   & \text{\phantom{$abcd$}$(e)$ ~if $\ell>k$, for~} j=k+1,\,\ldots,\,\ell   .
                         \end{cases}
\]
These are readily checked, as is the correctness of setting the lowest ascent top to $n+1$ for the decreasing $n$-permutation (which has no ascent).
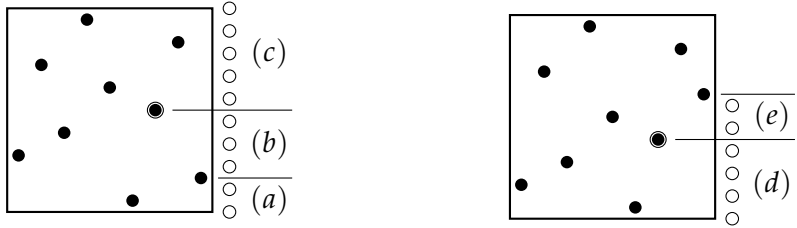
\begin{figure}[t]
  \centering
  \begin{tikzpicture}[scale=0.3]
    \plotperm{9}{3,7,4,9,6,1,5,8,2}
    \circpt{7}{5}
    \setplotptradius{0.2}
    \circpt{10.25}{.5}
    \circpt{10.25}{1.5}
    \circpt{10.25}{2.5}
    \circpt{10.25}{3.5}
    \circpt{10.25}{4.5}
    \circpt{10.25}{5.5}
    \circpt{10.25}{6.5}
    \circpt{10.25}{7.5}
    \circpt{10.25}{8.5}
    \circpt{10.25}{9.5}
    \node at (12,1) {$(a)$};
    \draw (9.75,2)--(13,2);
    \node at (12,3.5) {$(b)$};
    \draw (7.75,5)--(13,5);
    \node at (12,7.5) {$(c)$};
  \end{tikzpicture}
  \hspace{1in}
  \begin{tikzpicture}[scale=0.3]
    \plotperm{9}{2,7,3,9,5,1,4,8,6}
    \circpt{7}{4}
    \setplotptradius{0.2}
    \circpt{10.25}{.5}
    \circpt{10.25}{1.5}
    \circpt{10.25}{2.5}
    \circpt{10.25}{3.5}
    \circpt{10.25}{4.5}
    \circpt{10.25}{5.5}
    \node at (12,2) {$(d)$};
    \draw (7.75,4)--(13,4);
    \node at (12,5) {$(e)$};
    \draw (9.75,6)--(13,6);
  \end{tikzpicture}
  \caption{The possibilities for inserting a new point to the right of two 12--34-avoiders}\label{figAv1234}
\end{figure}

The process for translating an insertion encoding like this into functional equations is entirely standard.
For example, the application of rule $(a)$ to a permutation represented by the monomial $z^n x^k y^\ell$ yields permutations represented by
\[
    z^{n+1} \sum_{j=1}^\ell x^{k+1} y^j \;=\; \frac{zxy}{1-y} z^nx^k(1-y^\ell) .
\]
Extending this to a linear operator on the appropriate generating function gives
\[
\frac{zxy}{1-y} \big( H_1(z,x,1) - H_1(z,x,y) \big) ,
\]
to be included on the right of the equation for $H_1$.
Together with similar contributions from the other four rules, and a $zx^2y$ term for the initial 1-point permutation, this yields the desired functional equations.
\end{proof}

Techniques currently available would appear to be inadequate for solving these functional equations.
The primary source of their intractability is several instances of variables being present in the ``wrong slots'',
with $x$ occurring (in $xy$) in the third argument, and $y$ occurring in both the first argument (in $zy$) and the second argument (in $xy$).

Ferraz de Andrade, Lundberg and Nagle~\cite[Corollary 1.4]{FLN2015} prove that the %\emph{factorial growth rate}
exponential term in the asymptotics
of a particular \emph{subclass} of the 43--12-avoiders is
\[
\liminfty \sqrt[n]{\big|\av_n(\text{21--34}, \text{43--12})\big|/n!}\;=\;1/\log4
.
\]
Together with Proposition~\ref{propAv1234Sym}, this yields the following lower bound on the growth of 12--34-avoiders,
which improves on the bound of 0.5 % $\frac12$ % $1/2$
given in~\cite{CLN2013}.
\begin{prop}\label{propAv1234GRLowerBound}
  $\liminfty \sqrt[n]{\big|\av_n(\text{\textup{12--34}})\big|/n!} \;\geqs\; (\log4)^{-1} \;\approx\; 0.72134752$.
\end{prop}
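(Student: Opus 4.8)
The plan is to obtain the bound directly from the cited result of Ferraz de Andrade, Lundberg and Nagle by combining two elementary observations: a Wilf equivalence that lets us replace $\av(\text{12--34})$ by $\av(\text{43--12})$, and the trivial fact that imposing an additional avoidance condition can only shrink a class. Concretely, I would establish the termwise chain
\[
\big|\av_n(\text{12--34})\big| \;=\; \big|\av_n(\text{43--12})\big| \;\geqs\; \big|\av_n(\text{21--34},\,\text{43--12})\big|
\]
for every $n$, and then pass to the limit, the rightmost quantity being governed by~\cite{FLN2015}.

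First I would justify the equality. The vincular pattern 43--12 is one of the eight symmetries covered by Proposition~\ref{propAv1234Sym}: it lies in the symmetry orbit of 12--43, being its complement, since complementing the values of 12--43 sends the symbol string $1243$ to $4312$ while leaving the two adjacency constraints in place. As complementation is a bijection on permutations of $[n]$ that carries occurrences of a pattern to occurrences of its complement, $\av(\text{43--12})$ and $\av(\text{12--43})$ are equinumerous, and by Proposition~\ref{propAv1234Sym} both are equinumerous with $\av(\text{12--34})$. This yields the first equality. The step most needing care is precisely this one: 43--12 is not among the three patterns named explicitly in Proposition~\ref{propAv1234Sym}, so one must verify that it really is a symmetry of one of those representatives (here, the complement of 12--43) rather than belonging to a different Wilf class.

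Next, the inequality is immediate: a permutation avoiding both 21--34 and 43--12 in particular avoids 43--12, so $\av(\text{21--34},\,\text{43--12})$ is a subclass of $\av(\text{43--12})$ and its cardinalities are termwise no larger.

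Finally I would take $n$-th roots and pass to the limit. Dividing by $n!$ and extracting $n$-th roots preserves the termwise inequality, so
\[
\liminfty \sqrt[n]{\big|\av_n(\text{12--34})\big|/n!} \;\geqs\; \liminfty \sqrt[n]{\big|\av_n(\text{21--34},\,\text{43--12})\big|/n!} \;=\; (\log4)^{-1},
\]
the last equality being exactly~\cite[Corollary~1.4]{FLN2015}. A minor point worth flagging is that we need not know a priori that the limit on the left exists: the termwise inequality bounds the limit inferior of the left-hand side, and since the right-hand limit genuinely exists and equals $(\log4)^{-1}\approx0.72134752$, this already gives the stated lower bound. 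No genuine obstacle arises beyond correctly locating 43--12 within the symmetry class of Proposition~\ref{propAv1234Sym}; the substantive content is carried entirely by the cited asymptotic result.
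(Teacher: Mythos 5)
Your proposal is correct and is essentially the paper's own argument: the paper likewise deduces the bound by combining \cite[Corollary~1.4]{FLN2015} for the subclass $\av(\text{21--34},\text{43--12})\subseteq\av(\text{43--12})$ with the Wilf equivalence of Proposition~\ref{propAv1234Sym}. Your write-up merely makes explicit the details the paper leaves implicit, namely that 43--12 is the complement of 12--43 (hence covered by the symmetries in Proposition~\ref{propAv1234Sym}) and that the termwise inequality suffices at the level of limits.
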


The counting sequence for 12--34-avoiders --- and hence also for \thttfnl{} posets --- is
\href{https://oeis.org/A113226}{A113226} in~\cite{OEIS}.
By iterating the recurrence in the proof of Proposition~\ref{propAv1234FuncEq}, we were able to generate the first 557 terms in this sequence.
We now use the methods of series analysis, as presented by Guttmann in~\cite{Guttmann2015}, to estimate the asymptotics of $\big|\av_n(\text{\textup{12--34}})\big|$.
This approach was exploited in~\cite{CG2015,CGZJ2018} to investigate the number of permutations avoiding the classical pattern 1324; our analysis here is similar.

For each $n$, let $a_n=\big|\av_n(\text{\textup{12--34}})\big|/n!$, so
$E(z):=\sum_{n\geqs0}a_nz^n$ is
the exponential generating function for the number of 12--34-avoiders.
%$a_0,a_1,\ldots$ are the coefficients in .
We consider the behaviour of the ratios
$r_n=a_{n+1}/a_n$
of consecutive terms in this sequence.

\begin{figure}[ht]
  \centering
  ~~~
  \includegraphics[width=2.6in]{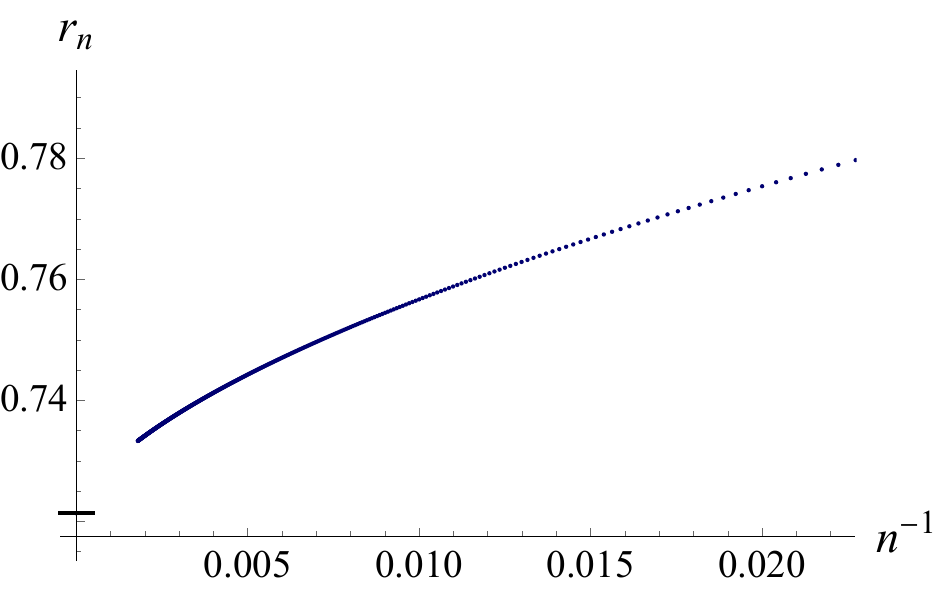}
  \hfill
  \includegraphics[width=2.6in]{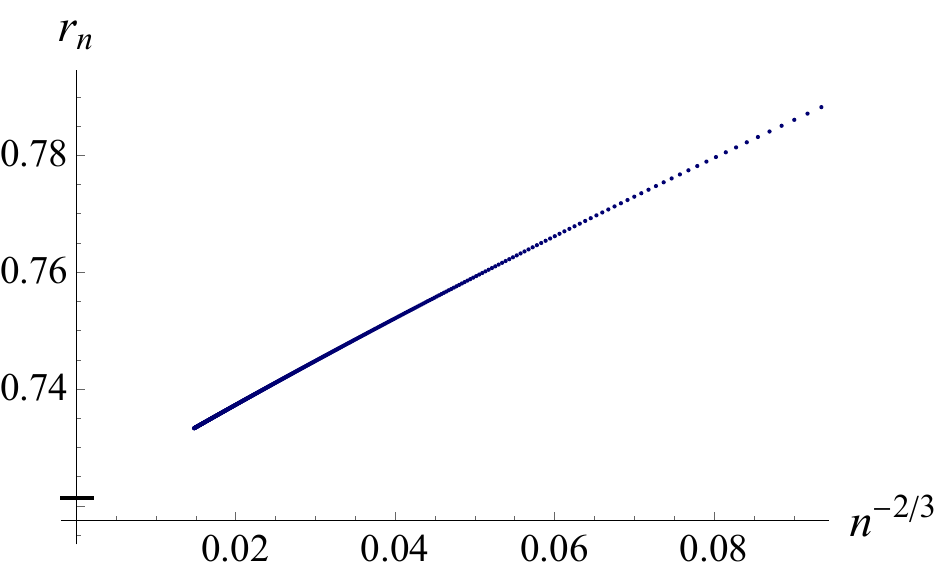}
  ~~~
  \caption{Plots of the ratios against $n^{-1}$, and against $n^{-2/3}$}\label{figPlotsRatios}
\end{figure}

If $E(z)$ were to exhibit a simple power-law singularity, with the asymptotics of the coefficients given by $a_n\sim A\cdot\gamma^n\cdot n^\beta$, for some constants $A$, $\gamma$ and $\beta$, then the ratios would satisfy
\begin{equation}\label{eqAlgSing}
  r_n \;=\; \gamma\left( 1 \:+\: \beta/n \:+\: O(n^{-2}) \right) ,
\end{equation}
in which case $r_n$ would be asymptotically linear with respect to~$n^{-1}$.
On the other hand, if the coefficients were to behave like $a_n\sim A\cdot\gamma^n\cdot\mu^{n^\alpha}\cdot n^\beta$, with a \emph{stretched exponential} factor~$\mu^{n^\alpha}$ for some constants $\mu>0$ and $\alpha<1$,
%yielding a non-algebraic singularity,
then
\[
r_n \;=\; \gamma\left( 1 \:+\: \frac{\alpha\log\mu}{n^{1-\alpha}} \:+\: \frac{\beta}n \:+\: O\big(n^{-(2-2\alpha)}\big) \right) .
\]
In particular (anticipating our findings below), when $\alpha=1/3$, this specializes to
\begin{equation}\label{eqNonAlgSing}
r_n \;=\; \gamma\left( 1 \:+\: \frac{\log\mu}{3 n^{2/3}} \:+\: \frac{\beta}n \:+\: O(n^{-4/3}) \right) ,
\end{equation}
in which $r_n$ is asymptotically linear with respect to $n^{-2/3}$.

In Figure~\ref{figPlotsRatios}, the ratios are plotted against $n^{-1}$, and against $n^{-2/3}$.
The nonlinearity of the plot against $n^{-1}$ is not consistent with the existence of an algebraic singularity, as can be seen from~\eqref{eqAlgSing}.
On the other hand, the plot against $n^{-2/3}$ appears close to being linear, which by~\eqref{eqNonAlgSing} would be consistent with the existence of a stretched exponential term with $\alpha\approx1/3$.
(We note in passing that a stretched exponential with exponent $\nfrac{1\!}3$ has recently been established rigorously for compacted binary trees~\cite{EPFW2021}.)

\begin{figure}[ht]
  \centering
  \includegraphics[width=2.6in]{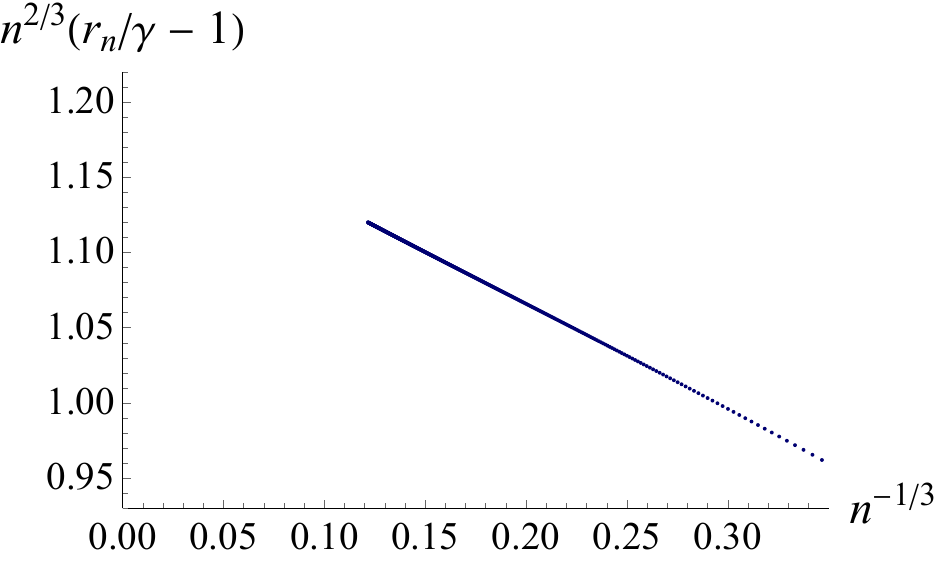}
  \caption{Plot of $n^{2/3}(r_n/\gamma-1)$ against $n^{-1/3}$, with $\gamma=(\log4)^{-1}$}\label{figPlotCubeRoot}
\end{figure}

Extrapolation of either of these plots is consistent with a limiting ratio of $(\log4)^{-1}$, as marked on the vertical axes, matching the lower bound for $\gamma$ given in Proposition~\ref{propAv1234GRLowerBound}.
Further evidence that $\gamma=(\log4)^{-1}$, and also that $\alpha=1/3$, is provided by the visual linearity of the plot in Figure~\ref{figPlotCubeRoot}, since equation~\eqref{eqNonAlgSing} implies that
\begin{equation}\label{eqCubeRoot}
n^{2/3}(r_n/\gamma-1) \;=\; \tfrac13\log\mu \:+\: \beta n^{-1/3} \:+\: O(n^{-2/3}) .
\end{equation}
Choosing a slightly different value for either $\gamma$ or $\alpha$ results in a plot with clear curvature.
Thus we believe that the %factorial growth rate
exponential term in the asymptotics
is actually equal to its lower bound.
\begin{conj}\label{conjAv1234GR}
  $\liminfty \sqrt[n]{|\av_n(\text{\textup{12--34}})|/n!} \;=\; (\log4)^{-1}$.
\end{conj}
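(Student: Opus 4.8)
Since $\liminf\leqs\limsup$ always holds and Proposition~\ref{propAv1234GRLowerBound} already gives $\liminf_{n\to\infty}\sqrt[n]{|\av_n(\text{12--34})|/n!}\geqs(\log4)^{-1}$, the whole content of the conjecture is the matching upper bound $\limsup_{n\to\infty}\sqrt[n]{|\av_n(\text{12--34})|/n!}\leqs(\log4)^{-1}$; this single inequality forces the limit to exist and to equal $(\log4)^{-1}$. By Proposition~\ref{propAv1234Sym} I may work instead with the Wilf-equivalent class $\av(\text{43--12})$, whose generating tree is described in the proof of Proposition~\ref{propAv1234FuncEq}.

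My plan is to read the upper bound off that generating tree. Set $a_n=|\av_n(\text{43--12})|/n!$. A node $(k,\ell)$ at level $n$ has exactly $n+1$ children when $\ell\leqs k$ (rules $(a)$--$(c)$), but only $\ell$ children when $\ell>k$ (rules $(d)$, $(e)$). Since $|\av_{n+1}(\text{43--12})|$ is the total number of children of the level-$n$ nodes, the ratio $r_n=a_{n+1}/a_n$ equals the \emph{average} number of children divided by $n+1$: an unconstrained node ($\ell\leqs k$) contributes a factor $1$, while a constrained node ($\ell>k$) contributes $\ell/(n+1)<1$. Thus
\[
r_n \;=\; \Pr[\ell\leqs k] \:+\: \mathbb{E}\!\left[\tfrac{\ell}{n+1}\,\middle|\,\ell>k\right]\Pr[\ell>k],
\]
the probabilities and expectation being taken over the $|\av_n(\text{43--12})|$ level-$n$ nodes. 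Proving $\limsup r_n\leqs(\log4)^{-1}$ therefore reduces to controlling the joint law of the rescaled pair $(k/n,\ell/n)$: I would seek a limiting density $\rho$ on $[0,1]^2$ governed by the transition rules, and verify that the resulting mean child-fraction is exactly $(\log4)^{-1}$, the value $\log4=2\log2$ being the one already realised by the subclass $\av(\text{21--34},\text{43--12})$ in Proposition~\ref{propAv1234GRLowerBound}.

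The hard part is that the functional equations of Proposition~\ref{propAv1234FuncEq} resist the usual kernel and obstinate-kernel methods, precisely because of the ``wrong-slot'' occurrences of the catalytic variables noted just after that proposition. Hence $\rho$ cannot be extracted from an explicit solution and must be controlled directly from the transition rules --- a delicate undertaking, since the numerics summarised in~\eqref{eqNonAlgSing}--\eqref{eqCubeRoot} indicate a stretched-exponential correction $\mu^{n^{1/3}}$, the hallmark (as for $1324$-avoiders) of an intricate singular structure, quite possibly a natural boundary, that lies beyond current analytic-combinatorics technology. I expect that pinning the growth rate to \emph{exactly} $(\log4)^{-1}$, rather than merely bounding it, is where a genuinely new idea will be required.

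As a purely combinatorial alternative for the upper bound, I would try to show that $\av(\text{43--12})$ exceeds its subclass $\av(\text{21--34},\text{43--12})$ by only a subexponential factor. Using the nested-downset description of \thttfnl{} posets, equivalently the labelled binary words of Proposition~\ref{prop322FreeWords}, one can attempt to encode an arbitrary $43$--$12$-avoider as a member of the subclass together with extra data recording its ``$21$--$34$-type'' configurations; if that data can be bounded by $\exp(o(n))$, the two classes share an exponential growth rate and the theorem follows from the subclass result of Ferraz de Andrade, Lundberg and Nagle. The conjectured $\mu^{n^{1/3}}$ excess is consistent with such a bound, but confining the overcount to be subexponential is exactly where the difficulty resurfaces.
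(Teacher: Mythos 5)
You should first be clear about what the paper itself does with this statement: it is a \emph{conjecture}, and the paper offers no proof of it. The paper's support consists of (i) the rigorous lower bound of Proposition~\ref{propAv1234GRLowerBound}, obtained from the Ferraz de Andrade--Lundberg--Nagle theorem on the subclass $\av(\text{21--34},\text{43--12})$ combined with the Wilf equivalence of Proposition~\ref{propAv1234Sym}, and (ii) series analysis of 557 exact terms: the ratio plots of Figures~\ref{figPlotsRatios} and~\ref{figPlotCubeRoot}, whose near-linearity in $n^{-2/3}$ (and visible curvature for any other choice of $\gamma$ or $\alpha$), together with the direct-fitting estimates of Table~\ref{tabAMuBeta}, is what leads the authors to believe the growth rate equals its lower bound. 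Your framing therefore matches the paper's exactly: the $\liminf$ bound is known, and the entire content of the statement is the matching upper bound $\limsup_{n\to\infty}\sqrt[n]{|\av_n(\text{12--34})|/n!}\leq(\log4)^{-1}$.

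Your proposal, however, is not a proof, and you say so yourself: both of your routes stall precisely at the step that constitutes the open problem. Your generating-tree bookkeeping is sound --- a node $(k,\ell)$ does have $n+1$ children when $\ell\leq k$ and $\ell$ children when $\ell>k$, so your identity for $r_n$ as a mixture of child-fractions is correct --- but controlling the limiting joint law of $(k/n,\ell/n)$ is exactly the analysis that the intractability of the functional equations in Proposition~\ref{propAv1234FuncEq} blocks, and you offer no way around it. Your second route is, as you concede, essentially a restatement of the conjecture: saying that $\av(\text{43--12})$ exceeds $\av(\text{21--34},\text{43--12})$ by a subexponential factor is equivalent to saying the two classes have the same exponential growth rate, which is the claim to be proved; no encoding or injection is actually constructed. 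So the verdict is: no error, a correct and well-motivated reduction, but a genuine gap --- the upper bound is not established. That said, this is the same gap that makes the statement a conjecture rather than a theorem in the paper; what you outline is strictly more ambitious than what the paper attempts, since either of your programs, if completed, would settle the conjecture, whereas the paper supplies only numerical evidence for it.
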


\begin{figure}[ht]
  \centering
  ~~~
  \includegraphics[width=2.6in]{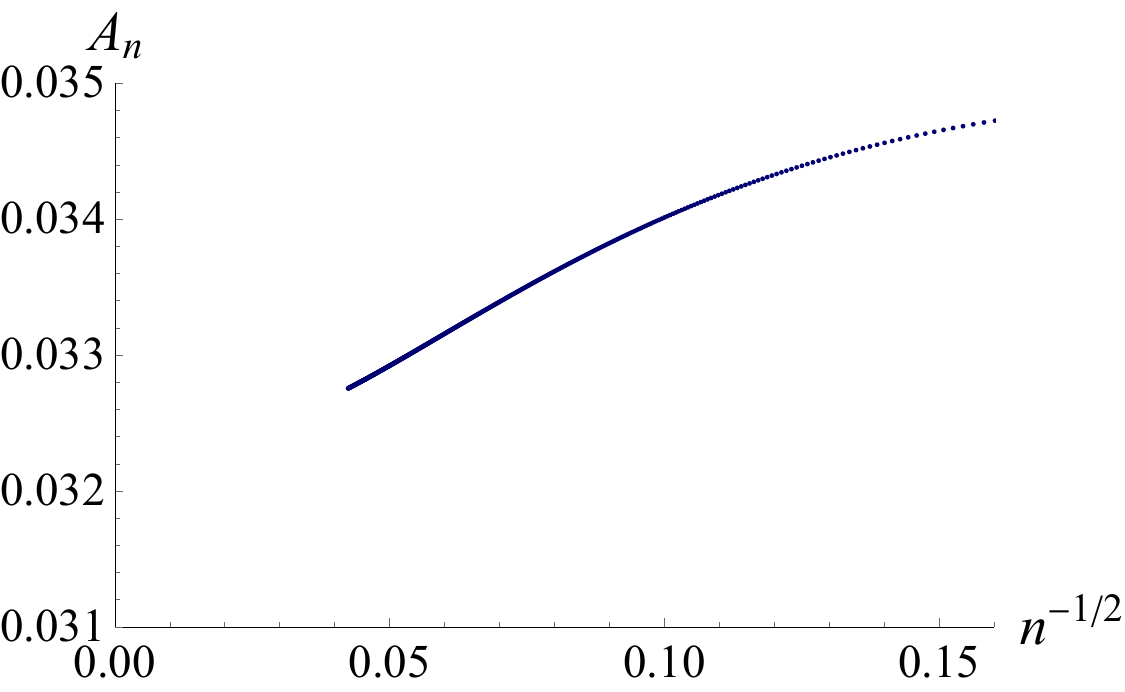}
  \hfill
  \includegraphics[width=2.6in]{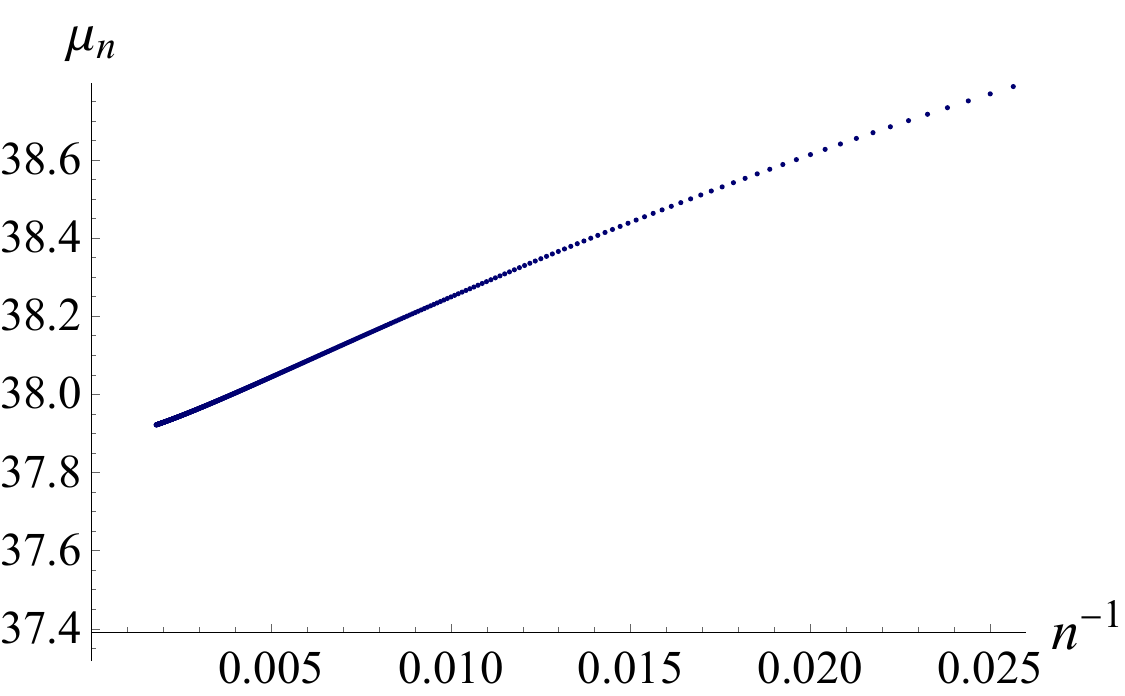}
  ~~~
\\[6pt]
  \includegraphics[width=2.6in]{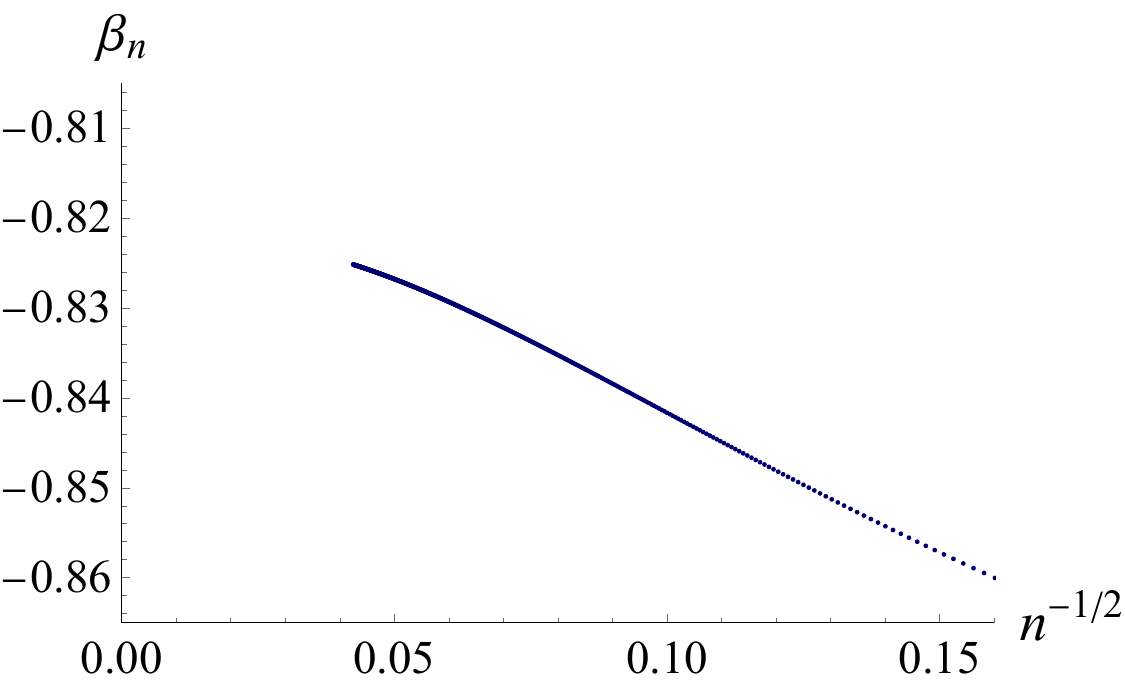}
  \caption{Plots of direct fitting estimates for $A$, $\mu$, and $\beta$}\label{figPlotsAMuBeta}
\end{figure}

By~\eqref{eqCubeRoot}, extrapolating from the plot in Figure~\ref{figPlotCubeRoot} also gives us an estimate for $\log\mu$ near to 3.6.
To gain a better approximation for $\mu$ and to approximate the values of $A$ and $\beta$, we use a direct fitting approach.
Given our assumed asymptotic form for $a_n$, we have
\begin{equation}\label{eqAv1234LogAsymptotics}
\log a_n \;\sim\; \log A \:+\: n\log\gamma \:+\: n^{1/3}\log\mu \:+\: \beta\log n .
\end{equation}
So, for each $n$, we solve the system of three linear equations
\[
 \big\{ \log A_n \:+\: k^{1/3}\log\mu_n \:+\: \beta_n\log k \;=\; \log a_k \:-\: k\log\gamma \::\: k=n,n+1,n+2 \big\} ,
\]
to give estimates $A_n$, $\mu_n$ and $\beta_n$ for $A$, $\mu$ and $\beta$, respectively.
Assuming our conjectured asymptotic form is correct, these estimates should converge to the actual values as $n$ increases.
Plots of the results are shown in Figure~\ref{figPlotsAMuBeta}, which can be extrapolated to yield approximations for the constants.

In addition, we used the \emph{Mathematica} \texttt{FindFit} function to find the values of the constants
that make the expression on the right side of~\eqref{eqAv1234LogAsymptotics}
give the best fit to different ranges of the data, as recorded in Table~\ref{tabAMuBeta}.

\begin{table}[ht]
  \centering
  \small
  \begin{tabular}{|l|c|c|c|}
    \hline
    % after \\: \hline or \cline{col1-col2} \cline{col3-col4} ...
    Data & $A$ & $\mu$ & $\beta$ \\\hline
    500 terms: $\phantom{{}_0}a_{58},\ldots,a_{557}$ & $0.03351$ & $38.050$ & $-0.83314$ \\
    400 terms:              $a_{158},\ldots,a_{557}$ & $0.03312$ & $37.976$ & $-0.82876$ \\
    300 terms:              $a_{258},\ldots,a_{557}$ & $0.03298$ & $37.950$ & $-0.82710$ \\
    200 terms:              $a_{358},\ldots,a_{557}$ & $0.03286$ & $37.936$ & $-0.82618$ \\
    100 terms:              $a_{458},\ldots,a_{557}$ & $0.03280$ & $37.928$ & $-0.82557$ \\
    \hline
  \end{tabular}
  \caption{Estimates for $A$, $\mu$ and $\beta$ from \texttt{FindFit}}\label{tabAMuBeta}
\end{table}

These results show evidence of convergence and are consistent with the extrapolated intercepts from the plots in Figures~\ref{figPlotCubeRoot} and~\ref{figPlotsAMuBeta} ($\log37.9\approx3.63$).
Hence, we believe that the asymptotics have the following form.

\begin{conj}\label{conjAv1234Asymptotics}
The asymptotic number of \textup{12--34}-avoiders is given by
\[
|\av_n(\text{\textup{12--34}})| \;\sim\; A\cdot (\log4)^{-n} \cdot \mu^{n^{1/3}} \cdot n^\beta \cdot n! ,
\]
exhibiting a stretched exponential term,
with constants $A\approx0.032$, $\mu\approx 37.9$ and $\beta\approx-0.82$.
\end{conj}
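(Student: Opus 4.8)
The plan is to prove the conjecture in three stages: first establish the exponential growth rate $\gamma=(\log4)^{-1}$ (that is, Conjecture~\ref{conjAv1234GR}), then determine the \emph{type} of the dominant singularity of the exponential generating function $E(z)=\sum_n a_n z^n$ at $z=\log4$, and finally transfer this singular behaviour to the coefficients. The key reformulation is analytic: a coefficient factor $\mu^{n^{1/3}}$ is, via the saddle-point correspondence, precisely the signature of a singularity of the form $\exp\!\bigl(c\,(1-z/\log4)^{-1/2}\bigr)$ at $z=\log4$, where the singular exponent $-\tfrac12$ produces the exponent $\tfrac13$ of $n$ and $\log\mu$ is a fixed positive multiple of $c^{2/3}$. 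Thus the conjecture is essentially equivalent to the statement that $E(z)$ has an essential singularity of this precise order at $z=\log4$, together with an algebraic correction whose exponent is fixed by $\beta$.

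For the growth rate, the lower bound $\gamma\geqs(\log4)^{-1}$ is already available from Proposition~\ref{propAv1234GRLowerBound}, so only the matching upper bound $\limsup a_n^{1/n}\leqs(\log4)^{-1}$ remains. I would attempt this on the poset side using the block decomposition of Proposition~\ref{prop322FreeWords}: a \thttfnl{} poset is determined by the chain of distinct strict downsets $D_1\subsetneq\cdots\subsetneq D_k$ together with the labelling data recorded by the blocks $(E_i,M_i)$. Bounding the number of labelled interleavings of these blocks and summing over $k$ should yield an upper bound on $a_n$ whose exponential rate is controlled by a single entropy-type optimisation over the block sizes; the identity $\log4=2\log2$ strongly suggests that the extremal rate is exactly $(\log4)^{-1}$, and the task is to verify that the optimum does not exceed the lower bound. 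Alternatively, one could try to read the radius of convergence of $E(z)$ directly off the functional equations of Proposition~\ref{propAv1234FuncEq} by a fixed-point or contraction estimate, even though those equations resist exact solution.

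The heart of the matter is the singularity type, and here I would not attempt to solve the functional equations in closed form. Instead I would exploit their characteristic \emph{dilation} structure --- the substitutions $z\mapsto zy$ and $x\mapsto xy$ appearing in the $H_1,H_2$ recursion --- which is exactly the feature capable of manufacturing an exponential of $n^{1/3}$. The natural combinatorial model is the explicit formula~\eqref{eq3FreeGF} for the \thf{} case, whose denominator $\prod_{i=1}^k\bigl(1-(2^i-1)z\bigr)$ creates a geometrically accumulating family of singularities; one would hope that the \ttf{} constraint organises the catalytic recursion into an analogous accumulating pattern whose envelope is the $(1-z/\log4)^{-1/2}$ factor. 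Following the treatment of compacted binary trees in~\cite{EPFW2021}, the concrete route is to trap $E(z)$ between two comparison functions $\exp\!\bigl(c_{\pm}(1-z/\log4)^{-1/2}\bigr)$ by an inductive estimate on the coefficients driven directly by the recursion, thereby squeezing both the exponent $\tfrac13$ and the constant $\mu$ between matching bounds, and then applying a suitable transfer or saddle-point theorem to recover the full coefficient asymptotics.

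The main obstacle is precisely this extraction of the singular order from functional equations with variables in the ``wrong slots''. Because the substitutions $y\mapsto1$, $x\mapsto xy$ and $z\mapsto zy$ do not decouple the catalytic variables, the kernel method and the standard machinery for linear or algebraic catalytic equations do not apply, so the inductive squeezing of the comparison bounds must be carried out by hand and is delicate near the accumulation of singularities. Even granting success in pinning down the exponents $\tfrac13$ and $\beta$, determining the constants $\mu$, $\beta$ and $A$ \emph{exactly} would require solving the recursion to subleading asymptotic order or finding an independent closed-form evaluation; it is quite possible that these techniques yield only the numerical values already estimated in Table~\ref{tabAMuBeta}, leaving the precise constants beyond reach.
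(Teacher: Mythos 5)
Your proposal cannot be checked against a proof in the paper, because the paper does not prove this statement: it is explicitly a conjecture. What the paper offers in its support is numerical series analysis. The authors iterate the insertion-encoding recurrence behind Proposition~\ref{propAv1234FuncEq} to compute the first 557 terms of $a_n=\big|\av_n(\text{12--34})\big|/n!$, observe that the ratio plot against $n^{-1}$ is curved (ruling out a pure power-law singularity, by \eqref{eqAlgSing}) while the plot against $n^{-2/3}$ is close to linear (consistent with a stretched exponential with $\alpha=1/3$, by \eqref{eqNonAlgSing}), check that $n^{2/3}(r_n/\gamma-1)$ plotted against $n^{-1/3}$ is visually linear precisely when $\gamma=(\log4)^{-1}$ (Figure~\ref{figPlotCubeRoot}), and then estimate the constants by fitting \eqref{eqAv1234LogAsymptotics} to the data (Figure~\ref{figPlotsAMuBeta} and Table~\ref{tabAMuBeta}). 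The values $A\approx0.032$, $\mu\approx37.9$, $\beta\approx-0.82$ in the statement are exactly these fitted values; nothing in the paper rigorously establishes even the exponential term $\gamma$, which is itself only Conjecture~\ref{conjAv1234GR}, the sole rigorous input being the one-sided bound of Proposition~\ref{propAv1234GRLowerBound}.

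Judged as a proof attempt, your plan has genuine gaps at each of its three stages, and you concede as much yourself. The upper bound $\limsup a_n^{1/n}\leqs(\log4)^{-1}$ is not proved: you name two possible attacks (an entropy optimisation over the block decomposition of Proposition~\ref{prop322FreeWords}, or a contraction estimate on the functional equations) but carry out neither, and ``the identity $\log4=2\log2$ strongly suggests'' is not an argument. The central step --- trapping $E(z)$ between $\exp\big(c_{\pm}(1-z/\log4)^{-1/2}\big)$ by an inductive estimate driven by the recursion --- is stated as a hope; your analytic reformulation is sound in principle (a singular exponent $-\tfrac12$ does correspond, via saddle point, to a stretched exponent $\tfrac13$), but you give no inductive scheme, and the feature you yourself identify (catalytic variables in the ``wrong slots'', so kernel methods fail) is exactly what makes this step inaccessible; the analogy with \cite{EPFW2021} is suggestive but not transferable without the missing analysis. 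Finally, even granted the singularity type, recovering coefficient asymptotics with the correction $n^\beta$ requires admissibility-type hypotheses you do not address, and the specific constants could at best be matched numerically. In short, what you have written is a reasonable research programme --- genuinely different in character from the paper's empirical methodology --- but neither it nor the paper constitutes a proof, and the statement remains open.
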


% ================================================================
\section{Discussion}\label{sectDiscussion}

Suppose $\PPP$ is an unlabelled poset with strict order relation $\prec$. Then $\PPP$ is an
\emph{interval order} if it has an \emph{interval representation}, that
is, if we can assign a real closed interval $[\ell_x,r_x]$ to each
element $x\in \PPP$ in such a way that $x\prec y$ if and only if
$r_x<\ell_y$ (so the interval corresponding to $x$ is strictly to the left of that corresponding to $y$).
The notion of an interval order was introduced by Fishburn~\cite{Fishburn1970} in 1970, who proved that $\PPP$ is an interval
order if and only if
$\PPP$ is \ttf{}.

Interval orders have attracted much attention in the literature, and
they have been shown to be equinumerous with several
combinatorial structures, namely with \emph{Fishburn matrices}, {\em
ascent sequences}, \emph{Stoimenow matchings},
and
%permutations avoiding the \emph{bivincular} pattern shown (as a \emph{mesh} pattern) in Figure~\ref{figBivincPatt}.
\emph{Fishburn permutations}.
\begin{bullets}
\item A Fishburn matrix of size $n$ is an
upper-triangular $n\times n$ matrix with non-negative integer entries with
the property that every row and every column contains a nonzero
element and the total sum of the entries is $n$.

\item A sequence
$x_1x_2\ldots x_n$ of nonnegative integers is an ascent sequence if $x_1=0$ and, for each $i>1$, we have $x_i \leqs 1+\asc(x_1\ldots x_{i-1})$, where
\[
\asc(x_1\ldots x_k) \;=\; \big|\{j:1\leqs j<k \text{~and~} x_j<x_{j+1}\}\big|
\]
is the number
of \emph{ascents} in $x_1\ldots x_k$.

\item A Stoimenow
matching of size $n$ is a matching
on the set $\{1,2,\ldots,2n\}$ with no \emph{left-nestings} or \emph{right-nestings}; that is, without any pair of nested edges that have
adjacent endpoints.

\begin{figure}[t]
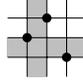

  \centering
  \mpatt{0.26}{3}{2,3,1}[1][1]
  \caption{Permutations avoiding this pattern are equinumerous with interval orders}\label{figBivincPatt}
\end{figure}
\item A Fishburn permutation is one that avoids the \emph{bivincular} pattern shown (as a \emph{mesh} pattern) in Figure~\ref{figBivincPatt}.
An occurrence in a permutation $\sigma$ of
%the pattern shown in Figure~\ref{figBivincPatt}
this pattern
is an occurrence of 231 with no points in the shaded regions.
That is,
%if $\sigma_i\sigma_j\sigma_k$ forms a 231, then this is an occurrence of the pattern if and only if both $j=i+1$ and $\sigma_k=\sigma_i-1$.
the permutation $\sigma$ contains this pattern if there exist indices $i<j$ such that $\sigma(j)+1=\sigma(i)<\sigma(i+1)$.
\end{bullets}

The enumeration of interval orders was possible only after discovering
their decomposition and linking it to ascent sequences, which were then enumerated~\cite{BMCDK2010}.
The generating function for interval
orders turns out to be the non-$D$-finite power series
\[
\sum_{n\geqs 0}\,\prod_{i=1}^{n}\,(1-(1-z)^i).
\]
Ascent sequences play a crucial role in papers \cite{DKRS2011,Jelinek2012,KR2011,Levande2013,Yan2011}
where interval orders are
enumerated explicitly with respect to extra \emph{statistics}, a statistic being a function from a set of objects to the natural numbers.
In particular, among other results, Kitaev and Remmel~\cite{KR2011} obtained
the bivariate generating function for \ttf{} posets %{\color{red}\xcancel{interval orders}}  {\color{blue}\ttf{} posets}
%in which $t$ counts the size and $q$
counting the number of minimal elements.
%\[
%F(t,q) \;=\; 1+\sum_{n\geq 0}\,\frac{qt}{(1-qt)^{n+1}}\,\prod_{i=1}^{n}\,(1-(1-t)^i) .
%\]
%They also conjectured that $F(t,q)$ can be written in the
%following form:
%\[
%F(t,q) \;=\; \sum_{n\geq 0}\,\prod_{i=1}^{n}\,(1-(1-t)^{i-1}(1-qt)).
%\]
%This conjecture was later confirmed independently by Yan~\cite{Yan2011} and Levande~\cite{Levande2013}.
As another example, a result in~\cite{DKRS2011} not only settled a conjecture of Jovovic on the number of \emph{primitive} (that is, binary) Fishburn matrices,
but also allowed the generating function to be refined to count Fishburn matrices whose entries do not exceed a fixed value~$k$,
enabling the counting of interval orders with at most $k$ indistinguishable elements.
%\[
%P_k(t) \;=\; \sum_{n\geq 0} \, \prod_{i=1}^{n} \, \left(1-\left(\frac{1-t}{1-t^k}\right)^{\!i\,} \right) .
%\]
%Ascent sequences played a crucial role in~\cite{DKRS2011,KR2011} where interval orders were enumerated with respect to extra statistics thus generalizing the generating function above. Notably,
Fishburn matrices themselves (rather than ascent sequences)
have also been used~\cite{Jelinek2012} to obtain further enumeration results related to interval orders.

These equinumerous objects, collectively known as \emph{Fishburn structures} counted by the \emph{Fishburn numbers}, are useful, not only for enumerative purposes, but
also for gaining a deeper understanding of the underlying structure of
interval orders, and for solving problems concerning them. Through
bijections we can translate properties to an
equinumerous structure that is more amenable to analysis.
Recently, Cerbai and Claesson~\cite{CC2023} introduced \emph{Fishburn trees} to this family,
obtaining simplified versions of %(and enhancing our understanding of)
some of the known bijections.

%\subsection{A hierarchy related to interval orders}

Interval orders and %objects equinumerous with them
other Fishburn structures
form the base
level of a hierarchy we are about to sketch. Restricting interval
orders in two different ways and considering corresponding restrictions
on the other objects from the base level, gives two lower
levels in the hierarchy, whose existence is of interest from the perspective of bijective
combinatorics.
Analysis of these lower levels also led to the resolution of a conjecture
of Pudwell~\cite{BMCDK2010} in the theory of permutation patterns
and of a conjecture of Jovovic~\cite{DKRS2011} in the theory of
matrices.

\begin{figure}[t]
\centering
\begin{tikzpicture}[xscale=2.5,yscale=1.6,>=latex]
\newcommand{\wnode}[3]{\node (#1) at (#2) [draw,rectangle] {\shortstack[c]{#3}};}
\newcommand{\gnode}[3]{\node (#1) at (#2) [draw,rectangle,fill=gray!25] {\shortstack[c]{#3}};}
\newcommand{\qnode}[2]{\node at (#2) [fill,circle,white] {???}; \node (#1) at (#2) [draw,circle,gray!50!black] {?};}
\newcommand{\oeis}[2]{\node at (#1) {\href{https://oeis.org/#2}{#2}};}
\newcommand{\karrow}[2]{\draw [->,thick] (#1)--(#2);}
\newcommand{\karrowd}[2]{\draw [->,thick,dashed] (#1)--(#2);}
\newcommand{\garrow}[2]{\draw [->,thick,gray,dashed] (#1)--(#2);}
\scriptsize
    \wnode{14}{1,4}{\ttf{}\\labelled posets}
    \wnode{24}{2,4}{composition\\[-1.5pt]matrices}
    \wnode{34}{3,4}{ascent seqs $\times$\\[-1.5pt]set partitions}
    \wnode{64}{6,4}{$\sum\limits_{n\geqs 0}\prod\limits_{i=1}^n\big(1\!-\!e^{-zi}\big)$}
    \oeis{6,3.6}{A079144}
    % ---
    \wnode{13}{1,3}{factorial\\posets}
    \wnode{23}{2,3}{partition\\[-1.5pt]matrices}
    \wnode{33}{3,3}{inversion\\sequences}
    \wnode{43}{4,3}{all\\permutations}
    \wnode{53}{5,3}{left-nesting-\\[-1.5pt]free matchings}
    \wnode{63}{6,3}{$n!$}
    \oeis{6,2.75}{A000142}
    % ---
    \gnode{12}{1,2}{interval orders:\\\ttf{} posets}
    \gnode{22}{2,2}{Fishburn\\matrices}
    \gnode{32}{3,2}{ascent\\sequences}
   %\gnode{42}{4,2}{$\av\!\left(\!\!\raisebox{-6pt}{\mpatt{0.15}{3}{2,3,1}[1][1]}\!\!\right)$}
    \gnode{42}{4,2}{Fishburn\\permutations}
    \gnode{52}{5,2}{Stoimenow\\matchings}
    \gnode{62}{6,2}{$\sum\limits_{n\geqs 0}\prod\limits_{i=1}^n\big(1\!-\!(1\!-\!z)^i\big)$}
    \oeis{6,1.6}{A022493}
    % ---
    \wnode{11}{1,1}{\ttf{} posets\\[-1.5pt]with maximal\\length chains}
    \wnode{31}{3,1}{self-modified\\ascent\\sequences}
    \wnode{41}{4,1}{$\av\!\big(3\overline{1}52\overline{4}\big)$}
    \wnode{61}{6,1}{$\sum\limits_{k=1}^n\binom{n+\binom{k}2-1}{n-k}$}
    \oeis{6,0.6}{A098569}
    % ---
    \wnode{10}{1,0}{$\{${\bf2}+{\bf2}, \textsf{N}$\}$-free\\posets}
    \wnode{20}{2,0}{SE-free\\Fishburn\\matrices}
    \wnode{21}{2,1}{Fishburn\\matrices\\with positive\\[-1.5pt]diagonal entries}
    \wnode{30}{3,0}{$101$-avoiding\\[-1.5pt]ascent\\sequences}
    \wnode{40}{4,0}{$\av(231)$}
    \wnode{60}{6,0}{$\frac1{n+1}\binom{2n}n$}
    \oeis{6,-0.3}{A000108}
    % ---
    \draw [thick] (14)--(24)--(34)--(64);
    \draw [thick] (13)--(23)--(33)--(43)--(53)--(63);
    \draw [thick] (12)--(22)--(32)--(42)--(52)--(62);
    \draw [thick] (11)--(21)--(31)--(41)--(61);
    \draw [thick] (10)--(20)--(30)--(40)--(60);
    \qnode{44}{4,4}
    \qnode{54}{5,4}
    %\qnode{21}{2,1}
    \qnode{51}{5,1}
    \qnode{50}{5,0}
    \karrow {10}{11}    \karrow {11}{12}    \karrow {12}{13}    \karrow {13}{14}
    \karrow {20}{21}    \karrow {21}{22}    \karrow {22}{23}    \karrow {23}{24}
    \karrow {30}{31}    \karrow {31}{32}    \karrow {32}{33}    \karrowd{33}{34}
    \karrow {40}{41}    \karrow {41}{42}    \karrow {42}{43}    \garrow {43}{44}
    \garrow {50}{51}    \garrow {51}{52}    \karrow {52}{53}    \garrow {53}{54}
\end{tikzpicture}
\caption{Relationships between interval orders and associated combinatorial objects}\label{figMainHierarchy}
\end{figure}

In the other direction, Claesson and Linusson~\cite{CL2011} considered supersets of the objects from the base level that are
equinumerous to each other.
Specifically, they investigated matchings without left nestings, a superset of
Stoimenow diagrams (which are matchings with neither left or right nestings).
It turns out that there are $n!$ such matchings.
This led to the definition of a certain subset of \ttfnl{} posets counted by $n!$, which they called \emph{factorial posets}, an extension of interval orders.
Other factorial objects include permutations, \emph{inversion sequences} (a superset of ascent sequences), and \emph{partition matrices} (a superset of Fishburn matrices introduced by Claesson, Dukes and Kubitske~\cite{CDKu2011}).

One can extend this approach by considering (natural) subsets and supersets of combinatorial objects already in the hierarchy, linking bijectively structures that have the same cardinalities.
The hierarchy obtained so far in this way has twenty-one combinatorial objects on five levels, as shown in Figure~\ref{figMainHierarchy}.
At the right of each row is the enumeration of the objects in the row and a link to the entry in the OEIS~\cite{OEIS}.
Solid lines indicate known bijections, and solid arrows indicate known embeddings.
For example, the class of permutations avoiding the pattern~231,
being in one-to-one correspondence with 101-avoiding ascent sequences,
is a subset of permutations avoiding the pattern~$3\overline{1}52\overline{4}$,
which in turn is in bijection with self-modified ascent sequences,
a superset of the ascent sequences avoiding~101.
Note that $\{${\bf2}+{\bf2}, {\bf3}+{\bf1}$\}$-free posets are also counted by the Catalan numbers.
Figure~\ref{figMainHierarchy} is based on work in~\cite{BMCDK2010,CDKu2011,CL2011,DJK2011,DMcN2019,DP2010,DunSte2011,Jelinek2015}.
In particular, the bottom three levels are explained %(and presented in Figure 1.1)
in~\cite{DMcN2019}.
We do not provide definitions of all the objects involved.

\begin{figure}[t]
\centering
\begin{tikzpicture}[xscale=2.5,yscale=1.6,>=latex]
\newcommand{\myblue}{blue!85!black}
\newcommand{\wnode}[3]{\node (#1) at (#2) [draw,rectangle] {\shortstack[c]{#3}};}
\newcommand{\gnode}[3]{\node (#1) at (#2) [draw,rectangle,fill=gray!25] {\shortstack[c]{#3}};}
\newcommand{\bnode}[3]{\node (#1) at (#2) [draw,rectangle,\myblue,fill=blue!5] {\shortstack[c]{#3}};}
\newcommand{\qnode}[2]{\node at (#2) [fill,circle,white] {???}; \node (#1) at (#2) [draw,circle,gray!50!black] {?};}
\newcommand{\bqnode}[2]{\node at (#2) [fill,circle,white] {???}; \node (#1) at (#2) [draw,circle,blue!75] {?};}
\newcommand{\kenum}[2]{\node at (#1) {$\phantom{\ldots,}#2,\ldots$};}
\newcommand{\benum}[2]{\node at (#1) [\myblue] {$\phantom{\ldots,}#2,\ldots$};}
\newcommand{\nenum}[3]{\node (#1) at (#2) [white] {$\,#3\,$}; \node at (#2) [\myblue] {$\phantom{\ldots,}#3,\ldots$};}
\newcommand{\oeis}[2]{\node at (#1) {\href{https://oeis.org/#2}{#2}};}
\newcommand{\garrow}[2]{\draw [->,thick,gray,dashed] (#1)--(#2);}
\newcommand{\barrow}[2]{\draw [->,thick,\myblue] (#1)--(#2);}
\newcommand{\barrowd}[2]{\draw [->,thick,\myblue,dashed] (#1)--(#2);}
% {A}{B}{x}{yA}{yB}
\newcommand{\karrowl}[5]{\node[shape=coordinate] (X) at ([yshift=#4]#1 -| #3,0) {}; \draw [->,thick,gray!75!black] ([yshift=#4]#1.west) -- (X) |- ([yshift=#5]#2.west);}
\newcommand{\karrowr}[5]{\node[shape=coordinate] (X) at ([yshift=#4]#1 -| #3,0) {}; \draw [->,thick,gray!75!black] ([yshift=#4]#1.east) -- (X) |- ([yshift=#5]#2.east);}
\newcommand{\barrowl}[5]{\node[shape=coordinate] (X) at ([yshift=#4]#1 -| #3,0) {}; \draw [->,thick,\myblue] ([yshift=#4]#1.west) -- (X) |- ([yshift=#5]#2.west);}
\scriptsize
    \wnode{17}{1,7}{\ttf{}\\labelled posets}
    \wnode{27}{2,7}{composition\\[-1.5pt]matrices}
    \wnode{37}{3,7}{ascent seqs $\times$\\[-1.5pt]set partitions}
    \wnode{67}{6,7}{$\sum\limits_{n\geqs 0}\prod\limits_{i=1}^n\big(1\!-\!e^{-zi}\big)$};
    \kenum{6,6.6}{1,3,19,207,3451}
    \oeis{6,6.4}{A079144}
    % ---
    \bnode{16}{1,6}{NL\\posets}
    \bnode{26}{2,6}{poset\\[-1.5pt]matrices}
    \nenum{66}{6,6}{1,2,7,40,357}
    \oeis{6,5.8}{A006455}
    % ---
    \bnode{15}{1,5}{\ttf{}\\NL posets}
    \bnode{25}{2,5}{$\{M_1,M_2,M_3\}$-free\\poset matrices}
    \nenum{65}{6,5}{1,2,7,37,272}
    \oeis{6,4.8}{A367494}
    % ---
    \bnode{14}{1,4}{\thf{}\\NL posets}
    \bnode{24}{2,4}{$M_0$-free\\poset matrices}
    \bnode{64}{6,4}{$\sum\limits_{k\geqs 0}\frac{z^k}{\prod_{i=1}^k\big(1-(2^i-1)z\big)}$}
    \benum{6,3.6}{1,2,6,26,158}
    \oeis{6,3.4}{A135922}
    % ---
    \wnode{13}{1,3}{factorial\\NL posets}
    \wnode{23}{2,3}{partition\\[-1.5pt]matrices}
    \wnode{33}{3,3}{inversion\\sequences}
    \wnode{43}{4,3}{all\\permutations}
    \wnode{53}{5,3}{left-nesting-\\[-1.5pt]free matchings}
    \wnode{63}{6,3}{$n!$};
    \kenum{6,2.75}{1,2,6,24,120}
    \oeis{6,2.55}{A000142}
    % ---
    \bnode{12}{1,2}{\thttf{}\\NL posets}
    \bnode{22}{2,2}{$\{M_0,\ldots,M_3\}$-free\\poset matrices}
    \bnode{32}{3,2}{labelled\\binary words}
    \bnode{42}{4,2}{$\av(\text{43--12})$}
    \nenum{62}{6,2}{1,2,6,23,107}
    \oeis{6,1.8}{A113226}
    % ---
    \gnode{11}{1,1}{interval orders:\\\ttf{} posets}
    \gnode{21}{2,1}{Fishburn\\matrices}
    \gnode{31}{3,1}{ascent\\sequences}
   %\gnode{41}{4,1}{$\av\!\left(\!\!\raisebox{-6pt}{\mpatt{0.15}{3}{2,3,1}[1][1]}\!\!\right)$}
    \gnode{41}{4,1}{Fishburn\\permutations}
    \gnode{51}{5,1}{Stoimenow\\matchings}
    \gnode{61}{6,1}{$\sum\limits_{n\geqs 0}\prod\limits_{i=1}^n\big(1\!-\!(1\!-\!z)^i\big)$};
    \kenum{6,0.6}{1,2,5,15,53}
    \oeis{6,0.4}{A022493}
    % ---
    \karrowl{11}{13}{.45}{2}{-2}
    \karrowr{21}{23}{2.55}{2}{-3}
    \karrowr{23}{27}{2.55}{3}{-3}
    \karrowr{31}{33}{3.425}{2}{-3}
    \karrowl{41}{43}{3.5}{2}{-3}
    \karrowl{51}{53}{4.475}{2}{-3}
    \draw [thick] (17)--(27)--(37)--(67);
    \draw [thick,\myblue] (16)--(26)--(66);
    \draw [thick,\myblue] (15)--(25)--(65);
    \draw [thick,\myblue] (14)--(24)--(64);
    \draw [thick,\myblue] (12)--(22)--(32)--(42)--(62);
    \draw [thick] (13)--(23)--(33)--(43)--(53)--(63);
    \draw [thick] (11)--(21)--(31)--(41)--(51)--(61);
                      \qnode{47}{4,7}  \qnode{57}{5,7}
    \bqnode{36}{3,6} \bqnode{46}{4,6} \bqnode{56}{5,6}
    \bqnode{35}{3,5} \bqnode{45}{4,5} \bqnode{55}{5,5}
    \bqnode{34}{3,4} \bqnode{44}{4,4} \bqnode{54}{5,4}
                                      \bqnode{52}{5,2}
    \barrowd{11}{12} \barrowd{12}{13} \barrowd{13}{14} \barrowd{14}{15} \barrow{15}{16} \barrowd{16}{17}
    \barrowd{21}{22} \barrowd{22}{23} \barrowd{23}{24} \barrowd{24}{25} \barrow{25}{26} \barrowd{26}{27}
    \barrowd{31}{32} \barrowd{32}{33}  \garrow{33}{34}  \garrow{34}{35} \garrow{35}{36}  \garrow{36}{37}
    \barrowd{41}{42}  \barrow{42}{43}  \garrow{43}{44}  \garrow{44}{45} \garrow{45}{46}  \garrow{46}{47}
     \garrow{51}{52}  \garrow{52}{53}  \garrow{53}{54}  \garrow{54}{55}  \garrow{55}{56}  \garrow{56}{57}
    \barrowl{12}{14}{.55}{4}{-2}
    \barrowl{12}{15}{.5}{2}{-1}
    \barrowl{13}{15}{.6}{2}{-4}
    \barrowl{14}{16}{.55}{2}{-2}
    \barrowl{15}{17}{.5}{3}{-2}
    \barrowl{22}{24}{1.45}{4}{-3}
    \barrowl{22}{25}{1.4}{2}{-3}
    \barrowl{24}{26}{1.35}{3}{-3}
\end{tikzpicture}
\caption{The place of naturally labelled posets in the hierarchy related to interval orders}\label{figNewHierarchy}
\end{figure}

%\subsection{Our contribution to the hierarchy related to interval orders}

In Figure~\ref{figNewHierarchy}, we introduce to this hierarchy the objects related to \nl{} posets that we investigated in the previous sections of this paper.
These new objects are shown in blue.
As before, solid lines indicate known bijections and solid arrows indicate known embeddings.
%For clarity, embeddings between objects in the original hierarchy (Figure~\ref{figMainHierarchy}) are omitted.

Our bijections in Propositions~\ref{propNLMatrices}, \ref{prop3FreeMatrices} and \ref{prop322FreeMatrices} are represented by the relationships between families of \nl{} posets and families of poset matrices in the first two columns.
Similarly, the other relationships between equinumerous families of objects in the second row from the bottom represent our bijections between \thttfnl{} posets and labelled binary words in Proposition~\ref{prop322FreeWords} and with permutations avoiding 43--12 in Proposition~\ref{propAv1234Bijection}.
Note that these labelled binary words are labelled binary ascent sequences since every binary word whose first letter is~$\mathsf0$ forms an ascent sequence.
The embeddings in columns 1, 2 and 4 are all self-explanatory.

Figure~\ref{figNewHierarchy} contains thirteen explicit open embedding questions.
Out of those, we would like to highlight the following that we consider to be of particular interest:

\begin{bullets}
  \item \textbf{Rows 1 and 2}: Find an embedding of Fishburn permutations (avoiding the bivincular pattern in Figure~\ref{figBivincPatt}) into $\av(\text{43--12})$ or one of its symmetries.
  Does this translate to nice embeddings of other row 1 objects into objects in row 2?

  \item \textbf{Rows 2 and 3}: Does the trivial embedding of permutations avoiding 43--12 into the set of all permutations induce natural embeddings of the other row 2 objects into objects in row 3?
  In particular, it would be good to find an embedding of \thttfnl{} posets into factorial posets of the same size.
  A naturally labelled partial order $\prec$ on $[n]$ is \emph{factorial} if for $i, j, k\in [n]$, $i<j\prec k$ implies $i\prec k$.
  Factorial posets are \ttf{}~\cite{CL2011}.

  \item \textbf{Rows 1--3}: Discover a natural set of matchings equinumerous to \thttfnl{} posets that contain Stoimenow matchings and are left-nesting-free.

  \item \textbf{Rows 3 and 4}:
  Find an embedding of factorial posets into {\bf 3}-free \nl{} posets.
  Find an embedding of partition matrices into $M_0$-avoiding lower triangular binary matrices.
  See~\cite{CDKu2011} for the definition of a partition matrix.

  \item Investigate the relationship between Fishburn matrices and incidence matrices of \nl{} posets.
\end{bullets}

% ================================================================
\subsection*{Acknowledgements}

The second author was partially supported by National Research Foundation of Korea (NRF) grants funded by the Korean government (MSIP) 2016R1A5A1008055 and 2019R1A2C1007518.
The third author is grateful to the SUSTech International Center for Mathematics for its hospitality during his visit to the Center in August 2024.

% ================================================================
\bibliographystyle{plain}
{\footnotesize\bibliography{posetsbib}}

\begin{thebibliography}{10}

\bibitem{Avann1972}
S.~P. Avann.
\newblock The lattice of natural partial orders.
\newblock {\em Aequationes Math.}, 8:95--102, 1972.

\bibitem{BS2000}
Eric Babson and Einar Steingr{\'{\i}}msson.
\newblock Generalized permutation patterns and a classification of the
  {M}ahonian statistics.
\newblock {\em S\'em. Lothar. Combin.}, 44:~Article B44b, 2000.

\bibitem{BDelLPP1999}
Elena Barcucci, Alberto Del~Lungo, Elisa Pergola, and Renzo Pinzani.
\newblock E{CO}: a methodology for the enumeration of combinatorial objects.
\newblock {\em J.~Differ. Equations Appl.}, 5(4-5):435--490, 1999.

\bibitem{BMCDK2010}
Mireille Bousquet-M\'{e}lou, Anders Claesson, Mark Dukes, and Sergey Kitaev.
\newblock {$(2+2)$}-free posets, ascent sequences and pattern avoiding
  permutations.
\newblock {\em J.~Combin. Theory Ser.~A}, 117(7):884--909, 2010.

\bibitem{Branden2004}
Petter Br\"and\'en.
\newblock Counterexamples to the {N}eggers-{S}tanley conjecture.
\newblock {\em Electron. Res. Announc. Amer. Math. Soc.}, 10:155--158, 2004.

\bibitem{BPS1996}
Graham Brightwell, Hans~J\"{u}rgen Pr\"{o}mel, and Angelika Steger.
\newblock The average number of linear extensions of a partial order.
\newblock {\em J.~Combin. Theory Ser.~A}, 73(2):193--206, 1996.

\bibitem{CER2018}
Yue Cai, Richard Ehrenborg, and Margaret Readdy.
\newblock {$q$}-{S}tirling identities revisited.
\newblock {\em Electron. J. Combin.}, 25(1):~Paper 1.37, 2018.

\bibitem{CC2023}
Guilio Cerbai and Anders Claesson.
\newblock Fishburn trees.
\newblock {\em Adv. in Appl. Math.}, 151:102592, 2023.

\bibitem{CCKMM2022}
Gi-Sang Cheon, Bryan Curtis, Gukwon Kwon, and Arnauld Mesinga~Mwafise.
\newblock Riordan posets and associated incidence matrices.
\newblock {\em Linear Algebra Appl.}, 632:308--331, 2022.

\bibitem{CDKu2011}
Anders Claesson, Mark Dukes, and Martina Kubitzke.
\newblock Partition and composition matrices.
\newblock {\em J.~Combin. Theory Ser.~A}, 118(5):1624--1637, 2011.

\bibitem{CL2011}
Anders Claesson and Svante Linusson.
\newblock {$n!$} matchings, {$n!$} posets.
\newblock {\em Proc. Amer. Math. Soc.}, 139(2):435--449, 2011.

\bibitem{CG2015}
Andrew~R. Conway and Anthony~J. Guttmann.
\newblock On 1324-avoiding permutations.
\newblock {\em Adv. in Appl. Math.}, 64(0):50--69, 2015.

\bibitem{CGZJ2018}
Andrew~R. Conway, Anthony~J. Guttmann, and Paul Zinn-Justin.
\newblock 1324-avoiding permutations revisited.
\newblock {\em Adv. in Appl. Math.}, 96:312--333, 2018.

\bibitem{CLN2013}
Joshua Cooper, Erik Lundberg, and Brendan Nagle.
\newblock Generalized pattern frequency in large permutations.
\newblock {\em Electron. J. Combin.}, 20(1):~Paper 28, 2013.

\bibitem{DJK2011}
Mark Dukes, V\'{\i}t Jel\'{\i}nek, and Martina Kubitzke.
\newblock Composition matrices, {$({\bf 2}+{\bf 2})$}-free posets and their
  specializations.
\newblock {\em Electron. J. Combin.}, 18(1):~Paper 44, 2011.

\bibitem{DKRS2011}
Mark Dukes, Sergey Kitaev, Jeffrey Remmel, and Einar Steingr\'{\i}msson.
\newblock Enumerating {$(2+2)$}-free posets by indistinguishable elements.
\newblock {\em J.~Comb.}, 2(1):139--163, 2011.

\bibitem{DMcN2019}
Mark Dukes and Peter R.~W. McNamara.
\newblock Refining the bijections among ascent sequences, {$(2+2)$}-free
  posets, integer matrices and pattern-avoiding permutations.
\newblock {\em J.~Combin. Theory Ser.~A}, 167:403--430, 2019.

\bibitem{DP2010}
Mark Dukes and Robert Parviainen.
\newblock Ascent sequences and upper triangular matrices containing
  non-negative integers.
\newblock {\em Electron. J. Combin.}, 17(1):~Paper 53, 2010.

\bibitem{DunSte2011}
Paul Duncan and Einar Steingr\'{\i}msson.
\newblock Pattern avoidance in ascent sequences.
\newblock {\em Electron. J. Combin.}, 18(1):~Paper 226, 2011.

\bibitem{Elizalde2006}
Sergi Elizalde.
\newblock Asymptotic enumeration of permutations avoiding generalized patterns.
\newblock {\em Adv. in Appl. Math.}, 36(2):138--155, 2006.

\bibitem{Elizalde2007}
Sergi Elizalde.
\newblock Generating trees for permutations avoiding generalized patterns.
\newblock {\em Ann. Comb.}, 11(3-4):435--458, 2007.

\bibitem{EPFW2021}
Andrew Elvey~Price, Wenjie Fang, and Michael Wallner.
\newblock Compacted binary trees admit a stretched exponential.
\newblock {\em J.~Combin. Theory Ser.~A}, 177:105306, 2021.

\bibitem{FLN2015}
Rodrigo Ferraz~de Andrade, Erik Lundberg, and Brendan Nagle.
\newblock Asymptotics of the extremal excedance set statistic.
\newblock {\em European J. Combin.}, 46:75--88, 2015.

\bibitem{Fishburn1970}
Peter~C. Fishburn.
\newblock Intransitive indifference with unequal indifference intervals.
\newblock {\em J.~Mathematical Psychology}, 7:144--149, 1970.

\bibitem{Gessel2016}
Ira~M. Gessel.
\newblock A historical survey of {$P$}-partitions.
\newblock In {\em The mathematical legacy of {R}ichard {P}. {S}tanley}, pages
  169--188. Amer. Math. Soc., 2016.

\bibitem{Guttmann2015}
Anthony~J. Guttmann.
\newblock Analysis of series expansions for non-algebraic singularities.
\newblock {\em J.~Phys.~A}, 48(4):045209, 33, 2015.

\bibitem{Jelinek2012}
V\'{\i}t Jel\'{\i}nek.
\newblock Counting general and self-dual interval orders.
\newblock {\em J.~Combin. Theory Ser.~A}, 119(3):599--614, 2012.

\bibitem{Jelinek2015}
V\'{\i}t Jel\'{\i}nek.
\newblock Catalan pairs and {F}ishburn triples.
\newblock {\em Adv. in Appl. Math.}, 70:1--31, 2015.

\bibitem{KR2011}
Sergey Kitaev and Jeffrey Remmel.
\newblock Enumerating {$({\bf 2}+{\bf 2})$}-free posets by the number of
  minimal elements and other statistics.
\newblock {\em Discrete Appl. Math.}, 159(17):2098--2108, 2011.

\bibitem{Kreweras1981}
G.~Kreweras.
\newblock Polyn\^omes de {S}tanley et extensions lin\'eaires d'un ordre
  partiel.
\newblock {\em Math. Sci. Humaines}, (73):97--116, 1981.

\bibitem{Levande2013}
Paul Levande.
\newblock Fishburn diagrams, {F}ishburn numbers and their refined generating
  functions.
\newblock {\em J.~Combin. Theory Ser.~A}, 120(1):194--217, 2013.

\bibitem{Neggers1978}
Joseph Neggers.
\newblock Representations of finite partially ordered sets.
\newblock {\em J.~Combin. Inform. System Sci.}, 3(3):113--133, 1978.

\bibitem{OEIS}
The OEIS Foundation~Inc.
\newblock The {O}n-{L}ine {E}ncyclopedia of {I}nteger {S}equences.
\newblock Published electronically at
  \href{https://oeis.org}{https:/$\!$/oeis.org}.

\bibitem{Stanley1972}
Richard~P. Stanley.
\newblock {\em Ordered structures and partitions}.
\newblock American Mathematical Society, 1972.

\bibitem{Stanley1973}
Richard~P. Stanley.
\newblock Acyclic orientations of graphs.
\newblock {\em Discrete Math.}, 5:171--178, 1973.

\bibitem{StanleyEC1}
Richard~P. Stanley.
\newblock {\em Enumerative Combinatorics. {V}ol. 1}.
\newblock Cambridge University Press, 1997.

\bibitem{Stanley1997}
Richard~P. Stanley.
\newblock Problem 10572.
\newblock {\em The American Mathematical Monthly}, 104(2):168, 1997.

\bibitem{SL1999}
Richard~P. Stanley and Stephen~C. Locke.
\newblock Graphs without increasing paths: Solution to {P}roblem 10572.
\newblock {\em The American Mathematical Monthly}, 106(2):168, 1999.

\bibitem{Stembridge1997}
John~R. Stembridge.
\newblock Enriched {$P$}-partitions.
\newblock {\em Trans. Amer. Math. Soc.}, 349(2):763--788, 1997.

\bibitem{Stembridge2007}
John~R. Stembridge.
\newblock Counterexamples to the poset conjectures of {N}eggers, {S}tanley, and
  {S}tembridge.
\newblock {\em Trans. Amer. Math. Soc.}, 359(3):1115--1128, 2007.

\bibitem{Yan2011}
Sherry H.~F. Yan.
\newblock On a conjecture about enumerating {$(2+2)$}-free posets.
\newblock {\em European J. Combin.}, 32(2):282--287, 2011.

\end{thebibliography}

\end{document}